\newtheorem{theorem}{Theorem}[section]
\newtheorem{proposition}[theorem]{Proposition}
\newtheorem{lemma}[theorem]{Lemma}
\theoremstyle{remark}
\theoremstyle{definition}
\newtheorem{definition}{Definition}[section]
\newtheorem{example}{Example}[section]
\newtheorem*{Acknowledgment}{Acknowledgment}
\newtheorem*{question}{Question}
\newtheorem*{T1}{Theorem~\ref{U(X,f)}}
\newcommand{\RN}[1]{%
  \textup{\uppercase\expandafter{\romannumeral#1}}%
}
\newcommand{\N}{\mathcal{N}}
\newcommand{\Z}{\mathbb{Z}}
\newcommand{\Q}{\mathbb{Q}}
\newcommand{\R}{\mathbb{R}}
\newcommand{\Loop}{\mathcal{L}}
\newcommand{\sign}{\text{sign}}
\newcommand{\Mor}{\text{Mor}}
\newcommand{\aut}{\text{Aut}}
\newcommand{\tr}{\operatorname{tr}}
\newcommand{\CWfp}{CW_{\text{fp}}}
\newcommand{\id}{\text{id}}
\newcommand{\fr}{\mathrm{fr}}
\begin{document}
\title[Functoriality of the Klein-Williams Invariant and Universality]{Functoriality of the Klein-Williams Invariant and Universality Theory}
\author{Ba\c{s}ak K\"u\c{c}\"uk}
\address{Universit\"at G\"ottingen, Mathematisches Institut, Bunsenstraße 3-5, 37073 Göttingen\\
}
\email{basak.kucuk@mathematik.uni-goettingen.de}

\begin{abstract}
Both the Klein-Williams invariant $ \ell_G(f) $ from \cite{KW2} and the generalized equivariant Lefschetz invariant $ \lambda_G(f) $ from \cite{weber07} serve as complete obstructions to the fixed point problem in the equivariant setting. The latter is functorial in the sense of Definition \ref{functorial}. The first part of this paper aims to demonstrate that $ \ell_G(f) $ is also functorial. The second part summarizes the ``universality" theory of such functorial invariants, developed in \cites{lueck1999, Weber06}, and explicitly computes the group in which the universal invariant lies, under a certain hypothesis. The final part explores the relationship between $ \ell_G(f) $ and $ \lambda_G(f) $, and presents examples to compare $ \ell_G(f) $, $ \lambda_G(f) $, and the universal invariant.
\end{abstract}

\maketitle

\section{Introduction}

The \textit{Lefschetz number} is a classical invariant in algebraic topology, providing an obstruction theory for the \textit{fixed point problem}, which asks whether an endomorphism on a compact ENR space can be homotoped to a fixed point free map. This leads to the well-known Lefschetz fixed point theorem: if an endomorphism has no fixed points, then the Lefschetz number is equal to zero. However, the Lefschetz number is not a complete invariant, as the converse of the Lefschetz fixed point theorem does not always hold; see \cite{brown.converse.fixpt} for details. A refined invariant, the \textit{Reidemeister trace} \cites{Reidemeister1936AutomorphismenVH, Wecken1941}, provides a complete invariant.

Furthermore, the Reidemeister trace is one of the generalized (functorial) Lefschetz invariants, which satisfy both additivity and homotopy invariance properties. A \textit{functorial Lefschetz invariant} is a pair $ (U, u) $, where $ U $ is a functor from the category of endomorphisms of finite CW-complexes to the category of abelian groups. For any object $ f $, there exists an invariant $ u(f) \in U(f) $ such that $ (U, u) $ satisfies a pushout formula for morphisms in the category of endomorphisms of finite CW-complexes. 

This means that the Reidemeister trace is functorial in the sense that it defines a functor from the category of endomorphisms of finite CW-complexes to the category of abelian groups. This notion was introduced by Lück in \cite{lueck1999}, where he developed a ``universality" theory for functorial Lefschetz invariants. More precisely, a Lefschetz invariant $ (U, u) $ is called \textit{universal} if, for any other functorial Lefschetz invariant $ (A, a) $, there exists a unique natural transformation $ \xi: U \to A $ such that  
\[
\xi(f)(u(f)) = a(f)
\]  
for all objects $ f $ in the category of endomorphisms of finite CW-complexes. L\"uck constructed the universal invariant in an abelian group in terms of Grothendieck groups of endomorphisms of finitely generated free modules. One of the main result of this paper provides an explicit computation of this group for the case of the category of endomorphisms on simply-connected spaces, given by below.

\begin{T1}\textit{
Let $X$ be a simply-connected space. The group $U^\Z(X,f)$, defined as the $K$-group
$$
K_0(\phi\text{-}\mathrm{end}_{\mathrm{ff} \, \Z \Pi(X)}),
$$
in which the universal Lefschetz invariant takes values, is independent of the choice of the space $X$ and the map $f$. Moreover, it is isomorphic to the group $U(\Z)$, which is the free abelian group generated by the set of irreducible characteristic polynomials over $\mathbb{Q}$ of integer matrices. That is,
$$
U(\Z) \cong \Z \left[ \{P \in \Z[x] \mid P \text{ is irreducible over } \Q, P(A) = 0 \text{ for some } A \in M_n(\Z)\} \right].
$$}
\end{T1}

After explicitly computing the abelian group $U^\mathbb{Z}(X,f)$, in which the universal invariant takes values, it is natural to ask the following question. This question is known as the \textit{realization problem}, and we provide an answer in Section \ref{sec.realization}; see Theorem \ref{thm.realization}.

\begin{question}
Does there exist a self-map $ f $ such that the universal functorial equivariant Lefschetz invariant $ u^\mathbb{Z}_G(X,f) $ is equal to the any given element $ [A] $, which lies in $[A]\in U^\Z(X,f)$?
\end{question}

In this paper, we are interested in the equivariant version of the functorial Lefschetz invariants. Weber generalized the construction of functorial Lefschetz invariants to the equivariant setting and defined the \textit{functorial equivariant Lefschetz invariant}; see Definition \ref{functorial}. In the equivariant version, the functorial Lefschetz invariant is also a pair $(U_G, u_G)$, consisting of a family of functors $U_G$ from the category of finite proper $G$-CW complexes for a discrete group $G$ to the category of abelian groups. Moreover, Weber improved the universality theory for functorial equivariant Lefschetz invariants in \cite{Weber06}, and proved that $(U^\Z_G,u^\Z_G)$ is universal, which we briefly explain in Section \ref{sec.universality}.

Another obstruction theory for the fixed point problem in the equivariant setting was introduced by Klein and Williams, as described below.

\begin{theorem}{\cite{KW2}}\label{KWfix}Let $f: M \rightarrow M$ be a $G$-map on a closed, smooth $G$-manifold $M$. Then, there exists an invariant
$$
\ell_G(f) \in \Omega_0^{G, \fr}(\Loop_f M),
$$
which vanishes if $f$ is $G$-equivariantly homotopic to a fixed-point-free map. Here,
$$
\Loop_f M = \{ \lambda \colon [0,1] \to M \mid f(\lambda(0)) = \lambda(1) \}
$$
denotes the space of paths twisted by $f$, and $\Omega_0^{G, \fr}(\Loop_f M)$ is the $G$-equivariant framed bordism group of $\Loop_f M$. Conversely, assume that $\ell_G(f) = 0$. Suppose the following conditions hold:
\begin{itemize}
\item $\dim M^H \geq 3$ for all conjugacy classes of subgroups $(H)$ such that the subgroup $H \subset G$ appears as an isotropy group in $M$, and
\item $\dim M^H \leq \dim M^K - 2$ for all conjugacy classes $(H), (K)$ with proper subgroup inclusions $K \subset H$, where $H$ and $K$ are isotropy subgroups of $M$.
\end{itemize}
Then $f$ is $G$-equivariantly homotopic to a fixed-point-free map.
\end{theorem}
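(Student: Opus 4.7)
The plan is to construct $\ell_G(f)$ as the $G$-equivariant framed bordism class of the fixed point set of a transverse perturbation of $f$, and then to prove the two implications separately: the ``only if'' direction by exhibiting a cobordism coming from the homotopy, and the converse by a stratum-by-stratum induction over the orbit type poset, where the dimension and gap hypotheses make the inductive step go through.

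First I would set up the invariant. After a small $G$-equivariant perturbation, one may assume that $f|_{M^H}$ is transverse to the identity on every fixed submanifold $M^H$, so that $\Fix(f)$ is a $G$-invariant $0$-manifold and in fact a finite $G$-set by compactness. The linearization $df_x - \id$ on $T_x M$ at each fixed point $x$ supplies a stable trivialization of the normal data, producing a $G$-equivariant framing; sending $x \mapsto \text{constant path at } x$ gives the reference map $\Fix(f) \to \Loop_f M$. The resulting class $\ell_G(f) \in \Omega_0^{G,\fr}(\Loop_f M)$ is independent of the perturbation by standard framed cobordism arguments applied to a generic homotopy between perturbations.

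For the forward (vanishing) direction, a $G$-equivariant homotopy $F \colon M \times [0,1] \to M$ from $f$ to a fixed-point-free map $f'$ can itself be made transverse to the diagonal, so that the track $\{(x,t) : F(x,t)=x\}$ is a $G$-equivariant framed cobordism in $\Loop_F M$ from $\Fix(f)$ to $\Fix(f')=\emptyset$. Pushing the cobordism forward along the collapse $\Loop_F M \to \Loop_f M$ gives $\ell_G(f) = 0$. For the converse, assume $\ell_G(f)=0$ and the two dimension hypotheses. I would induct on the poset of conjugacy classes of isotropy subgroups, ordered so that minimal strata come first. On a minimal stratum $M^H$, the restriction of $\ell_G(f)$ recovers the non-equivariant Klein-Williams class of $f|_{M^H}$ (as a $WH$-equivariantly free piece), and the hypothesis $\dim M^H \geq 3$ lets us invoke the non-equivariant theorem to homotope $f|_{M^H}$ to be fixed-point-free rel a neighborhood of lower strata (of which there are none at the base). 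The gap hypothesis $\dim M^H \leq \dim M^K - 2$ for the inductive step ensures that when passing from the union of already-handled strata to the next larger stratum $M^K$, the previously cleared locus sits in sufficient codimension that an equivariant general-position/surgery argument can extend the fixed-point-free homotopy over $M^K$ without recreating fixed points on what was already arranged.

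The main obstacle is the inductive step of the converse: one must show that the framed bordism class $\ell_G(f)$ decomposes compatibly along the isotropy filtration so that its vanishing controls each stratum in turn, and one must perform equivariant surgeries extending a partial homotopy across a stratum while staying fixed-point-free on smaller strata. Both dimension hypotheses enter essentially here, the first to run the non-equivariant theorem on each stratum and the second to guarantee that the extension step is unobstructed by general position. These are the technical steps that genuinely require the manifold (rather than merely CW) hypothesis and the smoothness of the $G$-action.
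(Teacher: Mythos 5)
The paper does not prove this theorem; it is cited verbatim from Klein--Williams \cite{KW2}, so there is no in-paper proof to compare against. Your sketch also takes a genuinely different route from theirs. Klein--Williams construct $\ell_G(f)$ homotopy-theoretically: they recast the fixed-point problem as the problem of deforming $(\mathrm{id},f)\colon M\to M\times M$ off the diagonal, package the deformations as sections of a fibration, and identify the primary obstruction, after equivariant stabilization and a Pontryagin--Thom identification, with an element of $\Omega_0^{G,\fr}(\Loop_f M)$. The converse direction is then obstruction-theoretic: the dimension hypothesis $\dim M^H\ge 3$ and the gap hypothesis ensure that the stabilization map on section spaces is an equivalence in the relevant range, so vanishing of the primary obstruction already produces a section, hence a fixed-point-free homotopy. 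Your proposal instead builds $\ell_G(f)$ geometrically from a transverse perturbation of $f$ and argues the converse by stratum-by-stratum surgery. The geometric picture is more transparent for computation and for the tom Dieck decomposition used later in the paper, while the Klein--Williams formulation makes the converse essentially formal once the connectivity estimates are in place.

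That said, the inductive step of your converse is a genuine gap, not a rough edge. The assertion that, once the lower strata have been cleared, ``an equivariant general-position/surgery argument can extend the fixed-point-free homotopy over $M^K$ without recreating fixed points'' is precisely what the theorem claims; the weak gap condition $\dim M^H\le\dim M^K-2$ by itself does not give a general-position argument to extend a fixed-point-free map rel a $G$-tube of the singular set. What one actually needs on the open top stratum of $M^K$ is a Wecken/Whitney-trick cancellation (this is where $\dim M^K\ge 3$ and the vanishing of the Reidemeister trace of $f^K$ enter), combined with a careful analysis of what equivariance forces near the boundary of the tube. You would also need to justify that your geometrically defined $\ell_G(f)$ does in fact decompose under the tom Dieck splitting into the Reidemeister traces of the restrictions $f^H$ modulo the Weyl group action; this is the content of the decomposition theorem the paper quotes (from \cite{kucuk2025kleinwilliamsconjectureequivariant}), not something that follows immediately from the transversality description. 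Without both of these, the induction does not close.
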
 

The definition of the functorial equivariant Lefschetz invariant provides a structural framework, making it natural to ask whether the Klein-Williams invariant is an instance of it. Even though the Klein-Williams invariant was originally constructed for smooth $G$-manifolds with finite group actions, it can also be defined for $G$-CW complexes with finite groups $G$. This is because the decomposition of the Klein-Williams invariant $\ell_G(f)$ under the tom Dieck splitting consists of Reidemeister traces, which can be defined on CW complexes; see Theorem \ref{ident3} for the decomposition and \cite{kucuk2025kleinwilliamsconjectureequivariant} for the proof and further details on the Klein-Williams invariant. We prove that the Klein-Williams invariant is indeed a functorial equivariant Lefschetz invariant, satisfying additivity, $G$-homotopy invariance, and compatibility with the induction structure for finite groups $G$, as shown in Proposition \ref{prop.functoriality}.

After proving that the Klein-Williams invariant is a functorial Lefschetz invariant, the natural question arises: does it correspond to the universal invariant? Note that it is not straightforward to define a unique map from $U_G^\Z(X,f)$, where the universal invariant lies, to the twisted loop space $\Omega_0^{G,\fr}(\Loop_fX)$ such that $u^\Z_G(X,f) \mapsto \ell_G(f)$. In the last section, we present examples where we compute both the Klein-Williams and universal invariants, highlighting the complexity of constructing such a map. Nevertheless, these examples demonstrate that $U_G^\mathbb{Z}(X,f)$ and $\Omega_0^{G,\fr}(\Loop_f X)$ are not isomorphic. This implies that there does not exist a map
$$
\Omega_0^{G,\fr}(\Loop_f X) \longrightarrow U_G^\mathbb{Z}(X,f)
$$
which sends the Klein–Williams invariant $\ell_G(f)$ to the universal invariant $u_G^\mathbb{Z}(X,f)$, and such that the composition with the canonical map
$$
U_G^\mathbb{Z}(X,f) \longrightarrow \Omega_0^{G,\fr}(\Loop_f X)
$$
is the identity on $\ell_G(f)$. Consequently, the Klein–Williams invariant cannot serve as the universal functorial equivariant Lefschetz invariant.

Another important functorial equivariant Lefschetz invariant was developed by Weber \cite{Weber06} via a trace map, which sends the universal functorial Lefschetz invariant $ u^\mathbb{Z}_G(X,f) $ to a new invariant called the \textit{generalized equivariant Lefschetz invariant}, denoted by $ \lambda_G(f) $. The construction of $\lambda_G(f)$ is more algebraic compared to the Klein-Williams invariant. Even though $\lambda_G(f)$ and $\ell_G(f)$ are constructed differently, they contain the same information for the fixed point problem. Under the \textit{gap hypothesis}, which is equivalent to the dimension hypothesis in Theorem \ref{KWfix}, both invariants satisfy the equivariant version of the converse of the Lefschetz fixed point theorem. More precisely, Weber proved the following theorem.

\begin{theorem}\cite[6.2]{weber07}\label{thm.weber}
Let $G$ be a discrete group. Let X be a cocompact proper smooth $G$-manifold satisfying the standard gap hypotheses. Let $f \colon X \to X$ be a $G$-equivariant endomorphism. Then the following holds:
If $\lambda_G(f) = 0$, then $f$ is $G$-homotopic to a fixed point free G-map.
\end{theorem}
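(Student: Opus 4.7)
The plan is to reduce the statement to the Klein--Williams Theorem \ref{KWfix} by showing that, under the gap hypothesis, the vanishing of $\lambda_G(f)$ forces the vanishing of $\ell_G(f)$. Both invariants are functorial equivariant Lefschetz invariants in the sense of Definition \ref{functorial}, so by the universality of $(U^\Z_G, u^\Z_G)$ discussed in Section \ref{sec.universality}, each is the image of the universal invariant under a natural transformation. The trace construction defining $\lambda_G$ decomposes $\lambda_G(f)$ as a sum of Reidemeister-type traces of the induced maps $f^H$ on the fixed-point sets $X^H$, indexed by conjugacy classes $(H)$ of isotropy subgroups, and the tom Dieck splitting recorded in Theorem \ref{ident3} yields an analogous decomposition for $\ell_G(f)$. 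The key claim is that these two decompositions agree component by component on each stratum.

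Once this component-wise matching is in hand, $\lambda_G(f) = 0$ forces the Reidemeister trace on each $X^H$ to vanish, hence $\ell_G(f) = 0$, and Theorem \ref{KWfix} directly supplies a $G$-equivariantly fixed-point-free homotopy of $f$. A parallel direct route would bypass $\ell_G$ altogether: on the deepest stratum $X^H$ the vanishing of the Reidemeister trace together with $\dim X^H \geq 3$ permits the classical Wecken theorem to eliminate all fixed points via a homotopy of $f^H$; equivariance then extends this to a $WH$-equivariant homotopy on the open stratum; and the codimension-$2$ condition $\dim X^H \leq \dim X^K - 2$ allows the homotopies to be glued inductively up the stratification by the orbit type.

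The main obstacle will be the precise identification of the two stratified decompositions. The invariant $\lambda_G(f)$ is built algebraically from endomorphisms of finitely generated free $\Z \Pi(X)$-modules arising from cellular chain complexes, while $\ell_G(f)$ is defined geometrically through equivariant framed bordism of the twisted loop space $\Loop_f X$. Reconciling the algebraic $(H)$-component of $\lambda_G$ (an equivariant Hattori--Stallings-style trace) with the geometric $(H)$-component of $\ell_G$ (a fixed-point index on $X^H$) requires a careful cell-by-cell comparison on each stratum, and the gap hypothesis is exactly what makes the two bookkeepings match without cross-contributions between strata. In the alternative direct-Wecken proof, the analogous delicate point is the equivariant extension: the codimension-$2$ gap is precisely what allows a general-position argument to remove the nearby fixed points created when a fixed-point-free homotopy on a closed stratum $X^H$ is pushed out to a $G$-invariant neighborhood in the ambient manifold.
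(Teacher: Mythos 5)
This statement is cited from Weber~\cite[6.2]{weber07} and is not proved in the paper; the paper invokes it as a known background result, so there is no ``paper's own proof'' to compare against. That said, your proposal deserves scrutiny on its own terms, and both routes you sketch have problems.

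Your primary route --- reducing to the Klein--Williams Theorem~\ref{KWfix} by showing $\lambda_G(f)=0 \Rightarrow \ell_G(f)=0$ --- runs into two concrete obstructions. First, a scope mismatch: Theorem~\ref{KWfix} is stated for \emph{finite} groups $G$ acting on \emph{closed} smooth $G$-manifolds, whereas Theorem~\ref{thm.weber} allows arbitrary \emph{discrete} $G$ with a cocompact proper action. When $G$ is infinite you cannot apply Klein--Williams at all, so this reduction only treats a special case. Second, the ``key claim'' that the stratified decompositions of $\lambda_G(f)$ and $\ell_G(f)$ ``agree component by component on each stratum'' is false. These two invariants live in different groups ($\Lambda_G(X,f)$ versus $\Omega_0^{G,\fr}(\Loop_f X)$) and the explicit computations in Section~\ref{sec.examples} show they take different values even component-wise (for instance, in the second example $\ell_G(f)$ and $\lambda_G(f)$ differ by the order $|WH_x|$, and the third example shows even this proportionality fails). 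What actually holds is only the simultaneous-vanishing statement of Theorem~\ref{thm.KW.lambda}, and its proof runs through the injective character map $\mathrm{ch}_G(M,f)$ and a careful comparison of degrees of $(\mathrm{id}-D_z f^H(x))^c$ with fixed-point indices, not through a naive identification of decompositions. If you instead used Theorem~\ref{thm.KW.lambda} as a black box (it does not rely on Theorem~\ref{thm.weber}, so there is no circularity), your reduction would be correct --- but again, only for $G$ finite.

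Your alternative ``direct Wecken'' route --- inducting up the orbit-type stratification, applying the non-equivariant Wecken theorem on the lowest stratum (using $\dim X^H \geq 3$), then using the codimension-$\geq 2$ gap to push the fixed-point-free homotopy off a neighborhood --- is much closer to what one should actually do, and to the approach Weber takes in~\cite{weber07}. It works for general discrete $G$. But as stated it is an outline, not a proof: the crucial steps (extending a $WH$-equivariant fixed-point-free homotopy on $X^H$ to a $G$-invariant tubular neighborhood without introducing new fixed points in higher strata, the role of the obstruction $\lambda_G(f)_{\overline{x}}$ on each component, and the need for the normal data to control the extension) all require real work that the gap hypothesis merely makes possible. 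If you want a self-contained proof, that is where the effort has to go.
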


As a result, it is natural to ask whether $\lambda_G(f)$ and $\ell_G(f)$ are equivalent. In the last section, our basic examples show that they do not yield the same invariant. However, Theorem \ref{thm.KW.lambda} in Section \ref{sec.lambda.invariant} demonstrates that they vanish simultaneously under the given conditions for smooth $G$-manifolds.

The structure of the paper is as follows. Section \ref{sec.functor} shows that the Klein-Williams invariant is indeed a functorial equivariant Lefschetz invariant. In Section \ref{sec.universality}, we provide a brief explanation of the universality theory of functorial equivariant Lefschetz invariants, followed by a proof of Theorem \ref{U(X,f)}. Furthermore, we address the realization problem for non-equivariant simply-connected spaces in Section \ref{sec.realization}. Section \ref{sec.lambda.invariant} explores the relationship between the Klein-Williams invariant and the generalized equivariant Lefschetz invariant, defined by Weber \cites{Weber06, weber07}, which is constructed as the image of a certain trace map from the universal invariant. We conclude the paper with examples in Section \ref{sec.examples}, where we explicitly compute the Klein-Williams, universal, and generalized equivariant Lefschetz invariants, allowing us to compare these invariants in three different situations.

\begin{Acknowledgment}
This work forms part of the author's PhD research conducted under the supervision of Thomas Schick. The author gratefully acknowledges Thomas Schick for his exceptional guidance, ongoing support, and numerous insightful discussions, particularly his assistance with the proof of Lemma \ref{conj.ideas}. This work was supported by the German Academic Exchange Service (DAAD).
\end{Acknowledgment}

\section{Functoriality of the Klein and Williams Invariant}\label{sec.functor}

Let $ G $ be a finite or discrete group. Denote by $ G\text{-}CW_{\text{fp}} $ the category of finite proper $ G $-CW complexes. A \textit{finite proper} $ G $-CW complex is a $ G $-CW complex in which the group action is proper and the $G$-CW complex has only finitely many cells. Note that a $ G $-CW complex is \textit{proper} if and only if each cell stabilizer is finite, which is equivalent to saying that $ G $ acts properly discontinuously. It is clear that if $ G $ is finite, then any $ G $-CW complex is proper.

Let $ \operatorname{End}(G\text{-}\CWfp) $ denote the category of $ G $-equivariant endomorphisms of finite proper $ G $-CW complexes. That is, the objects are pairs $ (X, f) $, where $ X $ is a finite proper $ G $-CW complex and $ f\colon X \to X $ is an equivariant map. A morphism $ \alpha\colon (X, f) \to (Y, g) $ between two objects is a map $ \alpha\colon X \to Y $ satisfying $ \alpha \circ f = g \circ \alpha $.

This section demonstrates that the Klein-Williams invariant serves as a functorial equivariant Lefschetz invariant on the family of categories finite $G$-CW complexes for finite groups. Weber introduced this concept (Definition 2.3 in \cite{Weber06}) for discrete groups $ G $. Before establishing the functoriality of the Klein-Williams invariant, we first state its definition.

\begin{definition}\label{functorial}
A \textit{functorial equivariant Lefschetz invariant} on the family of categories $G \text{-} \CWfp$, consisting of finite proper $G$-CW complexes for discrete groups $G$, is a pair $(\Theta, \theta)$ with the following components:

\begin{itemize}
\item A family $\Theta = \{ \Theta_G \}$ of functors
$$
\Theta_G \colon \text{End}(G\text{-}\CWfp) \to \mathcal{A}b,
$$
such that for every group inclusion $i\colon H \hookrightarrow G$, there exists a group homomorphism
$$
i_*\colon \Theta_H(X, f) \to \Theta_G(\operatorname{ind}_{i} X, \operatorname{ind}_{i} f)
$$
for each $(X, f) \in \text{End}(H\text{-}\CWfp)$. In addition, the following naturality condition holds for any morphism $\alpha\colon (X, f) \to (Y, g)$:
$$
i_* \Theta_H(\alpha) = \Theta_G(\operatorname{ind}_{i} \alpha) i_*.
$$
\item A family $\theta = \{ \theta_G \}$ of functions
$$
\theta_G \colon (X, f) \mapsto \theta_G(X, f) \in \Theta_G(X, f).
$$
\end{itemize}

Moreover, $(\Theta,\theta)$ satisfies the following conditions:
\begin{enumerate}
\item Additivity:\\
For a $G$-pushout with a $G$-cofibration $i_2$,
\[
\begin{array}{c}
\xymatrix{
  (X_0, f_0) \ar[r]^{i_1} \ar[d]_{i_2} \ar[rd]^{j_0} & (X_1, f_1) \ar[d]^{j_1} \\
  (X_2, f_2) \ar[r]_{j_2} & (X, f)
}
\end{array}
\]
the invariant satisfies
\[
\theta_G(X,f) = \Theta_G(j_1)\theta_G(X_1, f_1) + \Theta_G(j_2)\theta_G(X_2, f_2) - \Theta_G(j_0)\theta_G(X_0, f_0).
\]
\item $G$-Homotopy invariance:\\
If $\alpha_0, \alpha_1 \colon (X, f) \to (Y, g)$ are $G$-maps that are $G$-homotopic in $\text{End}(G\text{-}\CWfp)$, then
\[
\Theta_G(\alpha_0) = \Theta_G(\alpha_1): \Theta_G(X,f) \to \Theta_G(Y,g).
\]
\item Invariance under $G$-homotopy equivalence:\\
If $\alpha \colon (X, f) \to (Y, g)$ is a morphism in $\text{End}(G\text{-}\CWfp)$ such that the map $\alpha \colon X \to Y$ is a $G$-homotopy equivalence, then
\[
\Theta_G(\alpha): \Theta_G(X, f) \cong \Theta_G(Y, g)
\quad \text{and} \quad
\theta_G(X,f) \mapsto \theta_G(Y,g).
\]
\item Normalization:
\[
\theta_G(\varnothing, \text{id}_\varnothing) = 0 \in \Theta_G(\varnothing, \text{id}_\varnothing).
\]
\item Inclusions:\\
For every group inclusion $i \colon H \hookrightarrow G$, we have
\[
i_* \theta_H(X, f) = \theta_G(\text{ind}_i X, \text{ind}_i f) \in \Theta_G(\text{ind}_i X, \text{ind}_i f).
\]
\end{enumerate}
\end{definition}

Before demonstrating that the Klein–Williams invariant $\ell_G(f)$, which is stated in Theorem \ref{KWfix}, defines a functorial equivariant Lefschetz invariant in the sense of Definition \ref{functorial}, we first present the following result, which provides a detailed decomposition of the Klein–Williams invariant (see \cite{kucuk2025kleinwilliamsconjectureequivariant} for the details).

\begin{theorem} \label{ident3}
There exists an isomorphisms between abelian groups that gives an explicit decomposition of the Klein-Williams invariant $\ell_G(f)$.
\begin{align*}
   \Omega_0^{G,\fr} (\Loop_f M) & \to \bigoplus_{(H)} \left( \bigoplus_i \Z [\pi_1(M^H_i,*)_{f^H_i}] \right) \Big/ W_GH\\
    \ell_G(f) & \mapsto \mathop{\oplus}_{(H)} \left( \mathop{\oplus}_i \overline{R(f^H_i)} \right),
\end{align*}
where each $R(f_i^H)$ is the Reidemeister trace of the induced map $f^H_i: M^H_i \to M^H_i$, and $\overline{R(f^H_i)}$ is the quotient of the Weyl group $W_GH$, defined by $N_G(H)/H$.
\end{theorem}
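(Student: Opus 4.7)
The plan is to establish the isomorphism via the tom Dieck splitting of equivariant framed bordism in degree zero, and then to track the Klein--Williams class through that splitting by identifying each summand with the classical Reidemeister trace of the corresponding fixed-point map.

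First, since $G$ is finite, I would apply the tom Dieck splitting
\[
\Omega_0^{G,\fr}(X) \;\cong\; \bigoplus_{(H)} \Omega_0^{\fr}\bigl(EW_GH \times_{W_GH} X^H\bigr)
\]
to $X = \Loop_f M$. The key observation is that taking $H$-fixed points commutes with the twisted path construction: since $f$ is $G$-equivariant, one checks directly from the definition that
\[
(\Loop_f M)^H = \Loop_{f^H} M^H,
\]
where $f^H \colon M^H \to M^H$ is the restriction. Decomposing $M^H = \sqcup_i M^H_i$ into path components, with $f^H$ inducing self-maps $f^H_i\colon M^H_i \to M^H_i$ on the components it preserves (components sent to a different component contribute nothing to $\pi_0$ of the twisted loop space at any fixed component), yields a further splitting of $\Loop_{f^H}M^H$ into twisted loop spaces of connected spaces.

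Next I would invoke the standard identification $\pi_0(\Loop_{f^H_i} M^H_i) \cong \pi_1(M^H_i,*)_{f^H_i}$ between the components of the twisted loop space and the set of $f^H_i$-twisted conjugacy classes. Since $\Omega_0^{\fr}$ of any space is the free abelian group on its set of path components, and since the Borel construction $EW_GH \times_{W_GH}(-)$ passes to $W_GH$-coinvariants at the level of $\pi_0$, the $(H)$-summand becomes $\bigl(\bigoplus_i \Z[\pi_1(M^H_i,*)_{f^H_i}]\bigr)/W_GH$, which agrees with the target group on the right-hand side.

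The main obstacle is the second stage: showing that the $(H)$-component of $\ell_G(f)$ under this splitting is exactly $\overline{R(f^H_i)}$. Since $\ell_G(f)$ is defined geometrically as a Pontryagin--Thom-type class built from the equivariant fixed-point manifold, its splitting along orbit types is not automatic. My strategy would be to pick a $G$-equivariant perturbation of $f$ for which the fixed-point locus is transverse, arranging this stratum by stratum (which is possible because isotropy groups are finite and the gap hypothesis ensures sufficient codimension), and then to identify the $(H)$-stratum of $\Fix(f)$ with the transverse fixed-point manifold of $f^H$. Applying the non-equivariant identification of the Klein--Williams class with the Reidemeister trace component-wise on each $M^H_i$ delivers the claimed image; the passage to $W_GH$-coinvariants is forced because fixed points lying in a common Weyl orbit represent the same equivariant framed bordism class.
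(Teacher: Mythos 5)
The paper does not include its own proof of Theorem~\ref{ident3}; it defers to the cited reference \cite{kucuk2025kleinwilliamsconjectureequivariant}. That said, your outline is consistent with the standard route, and in fact it matches the ingredients used elsewhere in this very paper (the tom Dieck splitting appears again in the proof of Proposition~\ref{prop.functoriality}). The group-level steps are correct: applying the splitting
\[
\Omega_0^{G,\fr}(X) \cong \bigoplus_{(H)}\Omega_0^{\fr}\bigl(EW_GH\times_{W_GH}X^H\bigr),
\]
observing $(\Loop_f M)^H=\Loop_{f^H}M^H$ because $f$ is equivariant, decomposing $M^H$ into components on which $f^H$ is a self-map, identifying $\pi_0(\Loop_{f^H_i}M^H_i)$ with $\pi_1(M^H_i,*)_{f^H_i}$, and using that $\Omega_0^{\fr}(Y)\cong\Z[\pi_0 Y]$ together with the fact that the Borel construction gives $W_GH$-coinvariants on $\pi_0$ all go through.

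One point needs correcting. You invoke ``the gap hypothesis ensures sufficient codimension'' when arranging equivariant transversality of the fixed-point locus. Theorem~\ref{ident3} is stated without any gap or dimension hypotheses (those appear only in Theorem~\ref{KWfix}, and only for the \emph{converse} direction, i.e.\ realizing $\ell_G(f)=0$ by a fixed-point-free map). For the decomposition itself, you only need a $G$-equivariant perturbation of $f$ whose fixed points are generic, i.e.\ $\det(\mathrm{I}-D_zf^H)\neq 0$ at each fixed point $z$; this is achievable stratum-by-stratum by a standard inductive argument over orbit types and does not require codimension-$2$ separation of strata. Invoking the gap hypothesis is not merely unnecessary: if the argument genuinely depended on it, you would have proved a weaker, conditional statement than the theorem asserts. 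So the transversality step should be rephrased to appeal to genericity of fixed points of an equivariant self-map, without reference to the gap hypothesis.
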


The set $\pi_1(M^H_i,*)_{f^H_i}$ in the theorem above denotes the set of fundamental group elements modulo the \textit{twisted conjugacy} relation, which is given by the equivalence classes:
$$
 \beta \sim  \alpha \beta \phi(\alpha)^{-1},
$$
for all $\alpha, \beta \in \pi_1(M^H_i,*)$, where $\phi$ is the homomorphism induced by the map $f^H_i$.

The \textit{Reidemeister trace} $R(f)$ is defined as a sum over the fixed point classes of $f$, with coefficients given by their fixed point indices. The definition of the fixed point index and related concepts can be found in \cites{jiangbook, brownfix, DOLD19651}.

The \textit{fixed point classes} of a map $f$, denoted by $\operatorname{Fix}^c(f)$, form an equivalence relation on $\operatorname{Fix}(f)$. Two fixed points $x$ and $y$ belong to the same class if and only if there exists a path $\alpha$ from $x$ to $y$ such that $\alpha \simeq f(\alpha)$ relative to the endpoints.

\begin{definition}
Let $f\colon M \to M$ be a continuous map on a compact manifold $M$. The \textit{Reidemeister trace} $R(f)$ is defined as the image of the class
$$
\sum_{[x] \in \operatorname{Fix}^c(f)} i(f,[x]) [x]
$$
under the canonical injection
$$
\mathbb{Z} [\operatorname{Fix}^c(f)]  \hookrightarrow \mathbb{Z} [\pi_1(M,*)_f],
$$
where $i(f,[x])$ denotes the fixed point index of $f$ on an open set $U$ that contains all fixed points in the class $[x]$.
\end{definition}

We are now ready to prove that the Klein–Williams invariant $\ell_G(f)$ is functorial; in particular, it defines a functorial equivariant Lefschetz invariant on the family of categories $G\text{-}\CWfp$ for finite groups $G$.

\begin{proposition}\label{prop.functoriality}
Let $(\Omega, \ell)$ be the pair consisting of:
\begin{itemize}  
    \item A family $\Omega$ of functors  
    \begin{align*}  
        \Omega_0^{G,\fr}\colon \operatorname{End}(G\text{-}CW_\mathrm{fp}) &\to \mathcal{A}b\\  
        (M,f) &\mapsto \Omega_0^{G,\fr}(\Loop_f M)  
    \end{align*}  
    \item A family $\ell$ of functions 
    $$\ell_G\colon (M,f) \mapsto \ell_G(f) \in \Omega_0^{G,\fr}(\Loop_f M).$$
\end{itemize}  
Then, $(\Omega, \ell)$ defines a functorial equivariant Lefschetz invariant on the family of categories $G$-$CW_\text{fp}$ of finite $G$-CW complexes for finite groups $G$.
\end{proposition}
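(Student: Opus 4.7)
The strategy is to reduce each axiom of Definition \ref{functorial} to a corresponding statement about ordinary (non-equivariant) Reidemeister traces via the tom Dieck splitting in Theorem \ref{ident3}. Lück established in \cite{lueck1999} that the Reidemeister trace is a functorial Lefschetz invariant in the non-equivariant sense; our task is to promote each of these properties across the decomposition.

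First I would verify the underlying structural data. A morphism $\alpha\colon (X,f)\to (Y,g)$ in $\operatorname{End}(G\text{-}\CWfp)$ induces a $G$-equivariant map $\Loop_f X \to \Loop_g Y$ of twisted path spaces by post-composition with $\alpha$, making $(X,f)\mapsto \Omega_0^{G,\fr}(\Loop_f X)$ into a functor to $\mathcal{A}b$. For the induction datum attached to a group inclusion $i\colon H \hookrightarrow G$, one uses the canonical inclusion $X \hookrightarrow G\times_H X$ to build a map $\Loop_f X \to \Loop_{\operatorname{ind}_i f}(G\times_H X)$, and defines $i_*$ as the composition of the induced map on framed bordism with the standard induction homomorphism $\Omega_0^{H,\fr}\to \Omega_0^{G,\fr}$. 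The naturality identity $i_* \circ \Omega_0^{H,\fr}(\alpha) = \Omega_0^{G,\fr}(\operatorname{ind}_i\alpha)\circ i_*$ is then formal.

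Next I would dispatch axioms (2), (3), and (4). Normalization is trivial since the empty $G$-manifold carries no twisted loops. $G$-homotopy invariance follows from bordism invariance: a $G$-homotopy between $\alpha_0$ and $\alpha_1$ in $\operatorname{End}(G\text{-}\CWfp)$ produces an honest framed $G$-bordism between representatives of $\Omega_0^{G,\fr}(\alpha_0)$ and $\Omega_0^{G,\fr}(\alpha_1)$ applied to any class. Invariance under $G$-homotopy equivalence then follows at the level of each summand in Theorem \ref{ident3}: a $G$-homotopy equivalence $\alpha$ restricts to an ordinary homotopy equivalence $\alpha^H_i\colon X^H_i\to Y^H_i$ of each fixed-point component, hence an isomorphism on the appropriate twisted $\pi_1$-quotient, and Reidemeister traces are sent to Reidemeister traces under such equivalences.

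The crux is additivity (1) together with compatibility with inclusions (5). For (1), taking $H$-fixed points commutes with pushouts along $G$-cofibrations, so the given $G$-pushout induces ordinary pushouts of $H$-fixed components $X^H_{0,i}\to X^H_{1,j},\, X^H_{2,k}\to X^H$; non-equivariant additivity of $R(\cdot)$ then yields the required identity summand by summand under the decomposition of Theorem \ref{ident3}, and these assemble to the equivariant additivity formula after passing to $W_G H$-coinvariants. For (5), tom Dieck's formula for $(G\times_H X)^K$ expresses each $K$-fixed component of the induced space as a disjoint union indexed by cosets in $W_G(K)\backslash (G/H)^K$, with each component canonically homeomorphic to an $L$-fixed component of $X$ for some subgroup $L\subseteq H$ conjugate to $K$ in $G$; the induced self-map on each such component is conjugate to the corresponding $f^L_i$, hence has the same Reidemeister trace, and the bookkeeping matches exactly the image of $\ell_H(f)$ under $i_*$. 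I expect this last matching of indices, together with the verification that the identifications via twisted conjugacy classes of $\pi_1$ are compatible with the induction map on framed bordism, to be the main technical obstacle; once carried out, all five axioms descend cleanly from the classical properties of $R(\cdot)$.
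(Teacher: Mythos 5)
Your strategy — reduce every axiom to a statement about ordinary Reidemeister traces via the tom Dieck splitting of Theorem~\ref{ident3} — is exactly the paper's, but several of your local arguments differ and two are worth commenting on. For $G$-homotopy invariance, you invoke bordism invariance directly: a homotopy $\alpha_t$ through morphisms $(X,f)\to(Y,g)$ induces a $G$-homotopy $\Loop_f X \to \Loop_g Y$, and $\Omega_0^{G,\fr}$ is a $G$-homotopy functor, done. This is cleaner than the paper's proof, which passes to the tom Dieck summands and constructs an explicit two-parameter path homotopy $H(t,s)$ witnessing that $\alpha_0(\lambda)$ and $\alpha_1(\lambda)$ are twisted-conjugate; since the paper's own argument already uses that each $\alpha_t$ is a morphism (it needs $\alpha_t(f(x))=g(\alpha_t(x))$), your shortcut is available to it too. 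For the induction structure, the paper instead builds $i_*$ component-wise against the tom Dieck splitting, giving the concrete formula $[\lambda]\mapsto\sum_{gH\in G/H}[(g,\lambda)]$ on $\Omega_0^{\fr}(\Loop_f X^N)$; your description via ``canonical inclusion plus standard bordism induction'' is morally the same map but would need to be unwound to see that it agrees with that formula and respects the splitting, so as stated it is not yet a definition.

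The genuine gap you have is the one you flagged yourself: axiom (5), inclusions. You propose a double-coset analysis of $(G\times_H X)^K$ indexed by $W_G(K)\backslash (G/H)^K$, but you do not carry out the index bookkeeping, the matching of twisted-conjugacy classes, nor the verification that the fixed-point indices agree. That matching is where almost all of the paper's work on this proposition sits: the paper restricts to finite $G$, uses that $G\times_H X$ becomes $G/H\times X$ after forgetting the $G$-action, and then applies the multiplicativity of the fixed-point index to show $i(\tilde f_i|_{\{gH\}\times X^N_i},[(g,x)])=i(f_i,[x])$, concluding $R(\tilde f^N)=i_*(R(f^N))$. Without this computation the proposition is not proved; your double-coset approach is plausibly more general than the paper's, but it is exactly the non-formal step, and ``I expect this to be the main obstacle'' is not a substitute for doing it. Secondarily, for additivity you appeal to ``$H$-fixed points commute with pushouts along $G$-cofibrations'' and then Ferrario's theorem; this is fine, but you should also note, as the paper does, that the resulting identity for $R(f^H)$ descends to $\overline{R(f^H)}$ after passing to the $W_GH$-quotient, which is the form actually needed on each tom Dieck summand.
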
 

\begin{proof}
We begin by verifying that the functors $\Omega_0^{G,\mathrm{fr}}$ are compatible with the induction structure. Let $i\colon H \to G$ be a group inclusion. In the category of $G$-CW complexes, the induced $G$-space of an $H$-space $X$ is defined as 
$$
\mathrm{ind}_i X := G \times_H X,
$$
the quotient of $G \times X$ by the equivalence relation $(g i(h), x) \sim (g, hx)$ for $g \in G$, $h \in H$, and $x \in X$. The $G$-action on $G \times_H X$ is given by $g' \cdot [g, x] := [g' g, x]$.

The induced map $\mathrm{ind}_i f$, denoted $\tilde{f}$, is defined by $[g, x] \mapsto [g, f(x)]$. It is straightforward to verify that $\tilde{f}$ is a $G$-equivariant map.

To define the group homomorphism $\Omega_0^{H,\fr}(\Loop_fX) \to \Omega^{G,\fr}_0(\Loop_{\tilde{f}} (G \times_H X))$, we will consider the tom Dieck splitting of $\Omega^{G,\fr}_0(\Loop_{\tilde{f}} (G \times_H X))$ (for the details of the tom Dieck splitting, see \cite[Theorem 1.3, page 246]{may1996equivariant} and \cite[Theorem 7.7, page 154]{Dieck1987}):

\begin{align*}
    \Omega^{G,\fr}_0(\Loop_{\tilde{f}} (G \times_H X))= \bigoplus_{\substack{(N)\\N\leq G}} \Omega_0^{\fr}(EW_{G}N \times_{W_{G}N} (\Loop_{\tilde{f}}(G\times_H X))^N),
\end{align*}
where $W_GN:= N_G(N)/N$ is the Weyl group. When the group $G$ is clear from context, we simply write the Weyl group as $WN$. In this case, the tom Dieck splitting is equal to
\begin{align*}
    \Omega^{G,\fr}_0(\Loop_{\tilde{f}} (G \times_H X))= \bigoplus_{\substack{(N)\\N\leq H}} \Omega_0^{\fr}(EW_{G}N \times_{W_{G}N} \Loop_{\tilde{f}}(G\times_H X^N)).
\end{align*}
This is because $ (\Loop_{\tilde{f}}(G\times_H X))^N = \Loop_{\tilde{f}}(G\times_H X)^N $, and if $ N \leq H $, we have $ (G\times_H X)^N \cong G\times_H X^N $; otherwise, $ (G\times_H X)^N = \emptyset $. The latter holds because if we had an element $ [g,x] $, then we would have $ n[g,x] = [ng,x] = [g,x] $ for all $n\in N$ such that $n \not\in H$. This holds if and only if $ ng = g $, which forces $ n = 1 $, contradicting the non-emptiness of $ (G\times_H X)^N $ when $ N \not\leq H $. The former comes from the fact that $ n[g,x] = [ng,x] = [g,nx]=[g,x] $ if and only if $ x = nx $ for all $ n \in N $.

To define the induced group homomorphism associated with a given inclusion $ i\colon H \to G $, it suffices to construct the map $ \Omega_0^{H,\fr}(\Loop_fX) \to \Omega^{G,\fr}_0(\Loop_{\tilde{f}} (G \times_H X)) $ component-wise, following tom Dieck's splitting. In other words, we need to define the following map:
\begin{align*}
    (i_*)_{(N)}: \Omega_0^{\fr}(EW_{H}N \times_{W_{H}N} (\Loop_{{f}} X^N)) &\to \Omega_0^{\fr}(EW_{G}N \times_{W_{G}N} \Loop_{\tilde{f}}(G\times_H X^N)),
\end{align*}
for all $N \leq H$. We can define it from the map $\Omega_0^{\fr}(\Loop_{{f}} X^N) \to \Omega_0^{\fr}(\Loop_{\tilde{f}}(G\times_H X^N))$ because we have the following isomorphisms.
\begin{align*}
    \Omega_0^{\fr}(EW_{H}N \times_{W_{H}N} (\Loop_{{f}} X^N)) & \cong \Omega_0^{\fr}(\Loop_{{f}} X^N)/W_{H}N\\
    \Omega_0^{\fr}(EW_{G}N \times_{W_{G}N} \Loop_{\tilde{f}}(G\times_H X^N)) & \cong \Omega_0^{\fr}(\Loop_{\tilde{f}}(G\times_H X^N))/W_{G}N.
\end{align*}
The latter can be considered as follows since $W_HN\subseteq W_GN$:
\[
\left(\Omega_0^{\fr}(\Loop_{\tilde{f}}(G\times_H X^N))/W_{H}N\right)\big/W_{G}N
\]
We now define the map
\begin{align*}
    \Omega_0^{\fr}(\Loop_{{f}} X^N) & \to \Omega_0^{\fr}(\Loop_{\tilde{f}}(G\times_H X^N))\\
    [\lambda] & \mapsto \sum_{gH \in G/H} [(g,\lambda)],
\end{align*}
where, the path $(g,\lambda)$ on $G\times_H X^N$ is the extension of the path $\lambda$, which is defined by $t \mapsto [g,\lambda(t)]$ for all $gH \in G/H$. Therefore, we obtained the desired homomorphism
\[
i_*\colon \Omega_0^{H,\fr}(\Loop_fX) \to \Omega^{G,\fr}_0(\Loop_{\tilde{f}} (G \times_H X))
\]

Furthermore, let $\alpha\colon (X,f) \to (Y,g)$ be a morphism in the category of $H$-$CW_\text{fp}$. That is, we have a commutative diagram:
\begin{equation*}
\begin{tikzcd}
    \& X \arrow[r,"f"] \arrow[d,"\alpha"'] \& X \arrow[d,"\alpha"]\\
    \& Y \arrow[r,"g"] \& Y
\end{tikzcd}
\end{equation*}
Then, we have an induced map $\Omega_0^{H,\fr}(\alpha): \Omega_0^{H,\fr}(\Loop_f X) \to \Omega_0^{H,\fr}(\Loop_g Y)$, which we denote as $\alpha_*$ for simplicity. It is defined component-wise with respecting to the tom Dieck splitting:
\begin{align*}
    (\alpha_*)_{(N)}: \Omega_0^{\fr}(EW_{H}N \times_{W_{H}N} \Loop_f X^N) & \to \Omega_0^{\fr}(EW_{H}N \times_{W_{H}N} \Loop_g Y^N)\\
    [\lambda] & \mapsto [\alpha(\lambda)]
\end{align*}
We now describe the naturality condition of the induced maps. Specifically, we aim to show that the following diagram commutes; that is,
$i_*\alpha_*=(\text{ind}_i\alpha)_* i_*$.
\begin{equation} \label{induction.map}
\begin{tikzcd}
    \&  \Omega_0^{H,\fr}(\Loop_f X) \arrow[r,"i_*"] \arrow[d,"\alpha_*"'] \&  \Omega_0^{G,\fr}(\Loop_{\tilde{f}} (G \times_H X)) \arrow[d,"(\text{ind}_i\alpha)_*"]\\
    \&  \Omega_0^{H,\fr}(\Loop_g Y) \arrow[r,"i_*"] \&  \Omega_0^{G,\fr}(\Loop_{\tilde{g}} (G \times_H Y))
\end{tikzcd}
\end{equation}

First, observe that any path $\widetilde{\lambda} \in \Loop_{\tilde{f}}(G \times_H X^N)$ is of the form $\widetilde{\lambda}(t) = (g, \lambda(t))$ for some $gH \in G/H$ and a path $\lambda \in \Loop_f X^N$. This follows from the fact that when $G$ is finite, there is a natural isomorphism $G \times_H X \cong G/H \times X$ for any $H$-space $X$. This identification allows us to define the induced map $(\mathrm{ind}_i \alpha)_*$ explicitly, component-wise, as follows:
\begin{align*}
     \Omega_0^{\fr}(\Loop_{\tilde{f}} (G \times_H X^N))/W_G N & \to  \Omega_0^{\fr}(\Loop_{\tilde{g}} (G \times_H Y^N))/W_G N\\
     [(g,\lambda)] & \mapsto [(g,\alpha(\lambda))].
\end{align*}

This map is well-defined because two elements $[(g,\lambda)]$ and $[(g',\mu)]$ are equal in $\Omega_0^{\fr}(\Loop_{\tilde{f}} (G \times_H X^N))/W_G N$ if and only if $[\lambda]=[\mu]$ in $\Omega_0^{\fr}(\Loop_{f} (X^N))/W_H N$. As a result, the diagram \eqref{induction.map} commutes because
\[(\text{ind}_i\alpha)_*(i_*([\lambda]))= (\text{ind}_i\alpha)_* \left( \sum_{gH\in G/H}[(g,\lambda)]\right)=\sum_{gH\in G/H}[(g,\alpha(\lambda))]=i_*(\alpha_*([\lambda])).\]
This proves the compatibility with the induction structure.

Clearly, we have a family $\ell$ of functions $\ell_G$, which assigns to given object $(X,f)$ in $\text{End}(G\text{-}\CWfp)$ an element $\ell_G(f) \in \Omega_0^{G,\fr}(\Loop_fX)$. Now, we will verify the properties of the functorial equivariant Lefschetz invariant.

1. Additivity: For a $G$-pushout given as in the Definition \ref{functorial}, we need to show that $\ell_G(f)=(j_1)_*\ell_G(f_1)+(j_2)_*\ell_G(f_2)-(j_0)_*\ell_G(f_0)$.

From \cite[Theorem 4.1]{Ferrario1999}, we know that the generalized Lefschetz number (also known as Reidemeister trace) has the additivity property. That is,
\begin{align*}
    R(f^H)=(j_1^H)_*R(f_1^H)+(j_2^H)_*R(f_2^H)-(j_0^H)_*R(f_0^H).
\end{align*}

It is obvious that if the Reidemeister trace $R(f^H)$ is mapped to $\overline{R(f^H)}$ under the map $\Z[\pi_0(\Loop_f X^H)] \to \Z[\pi_0(\Loop_f X^H)]/WH$, the equality is preserved as follows:
\begin{align*}
    \overline{R(f^H)}=(j_1^H)_*\overline{R(f_1^H)}+(j_2^H)_*\overline{R(f_2^H)}-(j_0^H)_*\overline{R(f_0^H)}
\end{align*}

This implies the desired equality for the additivity condition holds by the decomposition of the Klein-Williams invariant under the tom Dieck splitting, which is given by Theorem \ref{ident3}.

2. $G$-Homotopy invariance: Let $\alpha_0, \alpha_1\colon (X, f) \to (Y, g)$ be two $G$-maps that are $G$-homotopic. Then, we aim to show that
\begin{align*}
    (\alpha_0)_* = (\alpha_1)_* \colon \Omega_0^{G,\fr}(\Loop_f X) \to \Omega_0^{G,\fr}(\Loop_g Y).
\end{align*}

Let $\alpha_t \colon X \to Y$ denote a $G$-homotopy between $\alpha_0$ and $\alpha_1$. Since $\alpha_t$ is $G$-equivariant, its fixed-point restriction $\alpha_t^H \colon X^H \to Y^H$ defines a non-equivariant homotopy for every subgroup $H \leq G$. Now, consider the induced maps $(\alpha_0)_*$ and $(\alpha_1)_*$ component-wise. They are given as follows:
\begin{align*}
    (\alpha_i^H)_*\colon \Z \pi_0(\Loop_f X^H)/WH & \to \Z \pi_0(\Loop_g Y^H)/WH \\
    [\lambda] & \mapsto [\alpha_i(\lambda)],
\end{align*}
where $i=0,1$ and $\lambda$ is any path on $X$ from $x$ to $f(x)$. Now, we will show that $\alpha_0(\lambda)$ and $\alpha_1(\lambda)$ are in the same path component; thus they correspond the same element in the target of the map. If there exits a path $\gamma$ from $\alpha_0(x)$ to $\alpha_1(x)$ such that $\gamma * \alpha_1(\lambda) \simeq \alpha_0(\lambda) * g(\gamma)$, then we are done. We can choose $\gamma$ as $\alpha_t(x)$ and define the homotopy $ H: I\times I \to Y^H$ as follows:
\begin{align*}
    H(t,s)=\alpha_{ts}(\lambda(t(1-s))) * \alpha_{1+(1-t)(s-1)}(\lambda(1-s(1-t)))
\end{align*}
One can check that this is well-defined, continuous map, so it gives a homotopy from $s=0$ to $s=1$:
\begin{itemize}
    \item $s=0$: $H(t,0)= \alpha_0(\lambda(t)) * \alpha_t(\lambda(1))= \alpha_0(\lambda) * \alpha_t(f(x))$,
    \item $s=1$: $H(t,1)=\alpha_t(\lambda(0)) * \alpha_1(\lambda(t))= \alpha_t(x) * \alpha_1(\lambda).$
\end{itemize}
Notice that $\alpha_t(f(x)) = g(\alpha_t(x))$ since $\alpha_t\colon (X^H,f) \to (Y^H,g)$ is a morphism. Therefore, we obtain that 
\[
\alpha_t(x) * \alpha_1(\lambda) \simeq \alpha_0(\lambda) * g(\alpha_t(x)).
\]
This implies that $ (\alpha_0^H)_*= (\alpha_1^H)_*$ for all $(H)$; and hence, $(\alpha_0)_*= (\alpha_1)_*$.

3. Invariance under $G$-homotopy equivalence: Let $\alpha\colon (X,f) \to (Y,g)$ be a morphism in $\text{End}(G\text{-}\CWfp)$ such that $\alpha\colon X \to Y$ is a $G$-homotopy equivalence, then we have a $G$-homotopy inverse $\beta\colon (Y,g) \to (X,f)$ of $\alpha$ such that $\alpha \circ \beta \simeq \id_Y$ and $\beta \circ \alpha \simeq \id_X$. Thus, from previous property, we can directly conclude the following induced map is an isomorphism:
\begin{align*}
    \alpha_*\colon \Omega_0^{G,\fr}(\Loop_f X) \xrightarrow{\cong} \Omega_0^{G,\fr}(\Loop_g Y)
\end{align*}

Therefore, we only need to show that $\alpha_*\colon \ell_G(f) \mapsto \ell_G(g)$. Again, we will consider the map $\alpha_*$ component-wise. By Theorem \ref{ident3}, the projection of $\ell_G(f)$ under tom Dieck splitting is $\overline{R(f^H)}$ for each conjugacy class $(H)$ of subgroups $H\leq G$. As a result, it is enough to show that $\alpha_*(R(f^H))=R(g^H)$ under the map $\Omega_0^{\fr}(\Loop_f X^H) \to \Omega_0^{\fr}(\Loop_g Y^H)$.

Since $\alpha$ is a $G$-homotopy equivalence, $\alpha^H\colon (X^H,f^H) \to (Y^H,g^H)$ is a homotopy equivalence with homotopy inverse $\beta^H\colon  (Y^H,g^H) \to (X^H,f^H)$. Also, note that we have
$\alpha^H \circ f^H = g^H \circ \alpha^H$, and this implies that $f^H \simeq \beta^H \circ g^H \circ \alpha^H$. Therefore, 
$$
R(f^H)=R(\beta^H \circ g^H \circ \alpha^H),
$$
by the homotopy invariance property of Reidemeister traces (see \cites{husseini82, geoghegan-handgeotop} for the details). Reidemeister traces also satisfy commutative property, i.e., 
$$
\alpha_*(R(\beta^H \circ g^H \circ \alpha^H))=R(\beta^H \circ \alpha^H \circ g^H)
$$
by \cite[Proposition 1.12]{husseini82}. Thus, we obtain that
\[
\alpha_*(R(f^H))=\alpha_*(R(\beta^H \circ g^H \circ \alpha^H))=R(\beta^H \circ \alpha^H \circ g^H)=R(g^H).
\]

4. Normalization: It is trivial.

5. Inclusions: Let $i\colon H \to G$ be an inclusion of groups, and let $(X, f)$ be an object in $\mathrm{End}(H\text{-}\CWfp)$. Our goal is to show that $i_*(\ell_H(f)) = \ell_G(\tilde{f})$, where $\tilde{f} = \mathrm{ind}_i f$. By Theorem \ref{ident3}, it suffices to verify that
\begin{align*}
    (i_*)_{(N)}\colon \Z \pi_0( \Loop_f X^N)/W_HN & \to \Z \pi_0( \Loop_{\tilde{f}} (G \times_H X^N))/W_G N\\
    \overline{R(f^N)} & \mapsto \overline{R(\tilde{f}^N)},
\end{align*}
for each subgroup $N \subseteq H$. Suppose that $X^N = \bigcup_{i=1}^n X^N_i$, where each $X^N_i$ is a connected component of $X^N$. It suffices to show that
$$
R(\tilde{f}_i) = i_*(R(f_i)),
$$
where $f_i = f|_{X^N_i}$ and $\tilde{f}_i = \tilde{f}|_{G \times_H X^N_i}$, as this implies that 
$$
R(\tilde{f}^N) = i_*(R(f^N)) =\sum_{[x]\in \text{Fix}^c(f^N)} \sum_{gH \in G/H} i(f^N,[x])[(g,x)],
$$ 
and therefore; one can conclude that $ \overline{R(\tilde{f}^N)}= (i_*)_{(N)}(\overline{R(f^N)}).$

Since $G$ is a finite group, we have $G\times_H X^N \cong G/H \times X^N$. Therefore, $G\times_H X^N$ has $|G/H|n$-many connected components:
\[
G/H\times X^N = \bigcup_{i=1}^n \bigcup_{gH\in G/H} \{gH\} \times X^N_i.
\]
Therefore, we have 
\[
R(\tilde{f}_i) = \sum_{gH\in G/H} R({\tilde{f}_i}|_{\{gH\}\times X^N_i}).
\]
Clearly, the number of fixed-point classes of $ f_i $ and $ \tilde{f}_i|_{\{gH\}\times X^N_i} $ are equal for each $ gH \in G/H $. More precisely, a class $ [x] $ belongs to the set of fixed-point classes of $ f_i $ if and only if $ [g,x] $ belongs to the set of fixed-point classes of $ \tilde{f}_i|_{\{gH\}\times X^N_i} $. Therefore, if 
\[
R(f_i)=\sum\limits_{[x]\in \text{Fix}^c(f_i)}i(f_i,[x])[x],
\]
\[
R(\tilde{f}_i|_{\{gH\}\times X^N_i})=\sum_{[x]\in \text{Fix}^c(f_i)}i(\tilde{f}_i|_{\{gH\}\times X^N_i},[(g,x)])[(g,x)].
\]
Also, by multiplicativity rule of index (see \cite[Chapter \RN{1}, 3.5 ]{jiangbook}), we have
\[
i(\tilde{f}_i|_{\{gH\}\times X^N_i},[(g,x)])=i(f_i,[x]).
\]
This is because $\tilde{f}_i|_{\{gH\}\times X^N_i}=c_g \times f_i$, where $c_g$ is constant map on $\{gH\}$. Thus,
\[
R(\tilde{f}_i|_{\{gH\}\times X^N_i})=\sum_{[x]\in \text{Fix}^c(f_i)}i(f_i,[x])[(g,x)].
\]
As a result, we obtain that
\[
R(\tilde{f}_i)= \sum_{gH\in G/H} \sum_{[x]\in \text{Fix}^c(f_i)}i(f_i,[x])[(g,x)]
\]
This is equal to $i_*(R(f_i))$ because 
\[
i_*\colon \sum_{[x]\in \text{Fix}^c(f_i)}i(f_i,[x])[x] \mapsto \sum_{gH\in G/H}\sum_{[x]\in \text{Fix}^c(f_i)}i(f_i,[x])[(g,x)].
\]
This finishes the proof.
\end{proof}

Now, we consider the following result in \cite[Theorem 4.7]{kucuk2025kleinwilliamsconjectureequivariant}: Given an equivariant self-map $ f $ on a $ G $-space $ X $,  
$$
R(f) = 0 \quad \text{if and only if} \quad \overline{R(f)} = 0,
$$
where $ \overline{R(f)} $ is the image of $ R(f) $ under the quotient map:  
\[
\Z[\pi_1(X, *)_f] \to \Z[\pi_1(X, *)_f]/G.
\]  
It is clear that $ R(f) = 0 $ implies $ \overline{R(f)} = 0 $. For the converse, the following proposition was used in \cite{kucuk2025kleinwilliamsconjectureequivariant}.  

\begin{proposition}\label{indicies.orbit}
Given an equivariant map $ f\colon X \to X $, where $ X $ is a $ G $-space, the fixed-point indices at points in the same orbit are equal. That is, for all $ g \in G $,  
\[
i(f, x) = i(f, gx)
\]  
\end{proposition}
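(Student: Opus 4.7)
My plan is to use the naturality (conjugation invariance) of the fixed-point index together with the $G$-equivariance of $f$. The key observation is that if $x$ is a fixed point of $f$, then so is $gx$, because $f(gx) = g f(x) = gx$ by equivariance. Moreover, left multiplication by $g$ defines a self-homeomorphism of $X$ that conjugates $f$ to itself: the identity $f \circ g = g \circ f$ can be rewritten as $g \circ f \circ g^{-1} = f$.

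The proof would proceed in three short steps. First, pick an open neighborhood $U$ of $x$ on which $x$ is the only fixed point (so $i(f, x)$ is defined as the index of $f|_U$ at $x$). Then $gU$ is an open neighborhood of $gx$, and the restriction $g\colon U \to gU$ is a homeomorphism. Second, use equivariance to show that the diagram
\begin{equation*}
\begin{tikzcd}
U \ar[r, "f|_U"] \ar[d, "g"'] \& X \ar[d, "g"] \\
gU \ar[r, "f|_{gU}"] \& X
\end{tikzcd}
\end{equation*}
commutes, so that $f|_{gU}$ and $f|_U$ are conjugate via $g$ in a neighborhood of the respective fixed points $gx$ and $x$. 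Third, invoke the commutativity/naturality property of the fixed-point index under homeomorphisms (see, e.g., \cite[Chapter I]{jiangbook} or \cite{DOLD19651}), which states that if $h$ is a homeomorphism with $h(x)$ an isolated fixed point of $h \circ f \circ h^{-1}$, then $i(f, x) = i(h \circ f \circ h^{-1}, h(x))$. Applying this with $h = g$ yields $i(f, x) = i(f, gx)$.

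I do not expect a real obstacle here: the argument is essentially a direct application of the conjugation invariance of the index, once one observes that equivariance produces the necessary conjugation relation. The only thing to be slightly careful about is to ensure that the isolating neighborhood $U$ chosen around $x$ translates correctly under $g$, but this is automatic because $g$ is a homeomorphism and sends isolated fixed points of $f$ to isolated fixed points of $f$.
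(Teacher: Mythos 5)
Your proof is correct, and it uses a different, more elementary route than the paper. You argue directly: equivariance rewrites to the conjugation relation $L_g \circ f \circ L_g^{-1} = f$ for the homeomorphism $L_g(x)=gx$, and then the commutativity (conjugation-invariance) axiom for the fixed-point index immediately gives $i(f,x)=i(L_g f L_g^{-1},gx)=i(f,gx)$. Your care with the isolating neighborhood---that $gU$ isolates $gx$ because $g^{-1}$ carries any fixed point in $gU$ back to one in $U$---is exactly the one hygiene point that needs checking, and you handle it correctly.

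The paper, by contrast, deliberately presents an ``alternative proof'' built on the machinery it has just developed: it takes $\alpha = L_g$ to be an automorphism of $(X,f)$ in $\operatorname{End}(G\text{-}\CWfp)$, invokes the $G$-homotopy-equivalence clause of functoriality (and its analogue for the Reidemeister trace) to get $\alpha_*(R(f)) = R(f)$, and then pairs this with the accompanying lemma establishing that $[x]\mapsto[\alpha(x)]$ is a bijection on fixed-point classes, so that matching coefficients in $\sum i(f,[x])[\alpha(x)] = \sum i(f,[y])[y]$ yields the equality of indices. What the paper's route buys is a demonstration that the statement is already captured by the abstract functoriality framework---it is a downstream consequence rather than a separate input. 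What your route buys is economy and self-containment: it rests only on a standard axiom of the fixed-point index (Dold/Jiang) plus the equivariance identity, with no appeal to Reidemeister traces, the Klein--Williams invariant, or the fixed-point-class bijection lemma. Both are valid; yours is the ``ground level'' argument, the paper's is the ``high level'' one.
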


We now give an alternative proof of the Proposition \ref{indicies.orbit} by using the functoriality of the Klein-Williams invariant. By the $G$-homotopy equivalence property of the definition of the functorial equivariant Lefschetz invariant, we obtained that
\[
\alpha_*(\ell_G(f))=\ell_G(g),
\]
where $\alpha\colon (X,f)\to(Y,g)$ is an automorphism in the category of $\text{End}(G\text{-}\CWfp)$. That is, it is a morphism in $\text{End}(G\text{-}\CWfp)$, and $\alpha\colon X\to Y$ is a homotopy equivalence.

Note that the Reidemeister trace also satisfies this property. In fact, this is a special case of the functoriality of the Klein-Williams invariant (see the third part of the proof of Theorem \ref{functorial}). Thus, $ \alpha_*(R(f)) = R(g) $. Given that
$$
R(f) = \sum_{[x] \in \text{Fix}^c(f)} i(f, [x])[x] \quad \text{and} \quad R(g) = \sum_{[y] \in \text{Fix}^c(g)} i(g, [y])[y],
$$
we conclude that
$$
\alpha_*(R(f)) = \sum_{[x] \in \text{Fix}^c(f)} i(g, [\alpha(x)])[\alpha(x)] = \sum_{[y] \in \text{Fix}^c(g)} i(g, [y])[y].
$$

\begin{lemma}
There exits a one-to-one correspondence between the fixed point point classes $f$ and $g$ when there exits automorphism $\alpha\colon (X,f) \to (Y,g)$ in $\operatorname{End}(G\text{-}CW_{\mathrm{fp}})$. In other words, the map
\begin{align*}
\operatorname{Fix}^c(f) &\xrightarrow{\alpha} \operatorname{Fix}^c(g)\\
[x] & \mapsto [\alpha(x)]
\end{align*}
is bijection.
\end{lemma}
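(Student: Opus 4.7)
The plan is to verify that the assignment $[x]\mapsto[\alpha(x)]$ is well-defined, injective, and surjective. Well-definedness is straightforward: since $\alpha\circ f=g\circ\alpha$, any $x\in\Fix(f)$ gives $g(\alpha(x))=\alpha(f(x))=\alpha(x)$, so $\alpha(x)\in\Fix(g)$. Moreover, if $x,x'\in\Fix(f)$ are in the same class via a path $\gamma\colon x\to x'$ with $\gamma\simeq f\gamma$ rel endpoints, then applying $\alpha$ to $\gamma$ yields a path from $\alpha(x)$ to $\alpha(x')$ and applying $\alpha$ to the homotopy gives $\alpha\gamma\simeq \alpha f\gamma=g(\alpha\gamma)$ rel endpoints, so $[\alpha(x)]=[\alpha(x')]$ in $\Fix^c(g)$.

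For bijectivity I would work through a $G$-homotopy inverse $\beta\colon Y\to X$ of $\alpha$, which exists because $\alpha$ is a $G$-homotopy equivalence. Although $\beta$ need not be a strict morphism of endomorphisms, the relations $\beta\alpha\simeq\id_X$, $\alpha\beta\simeq\id_Y$, and $\alpha f=g\alpha$ combine to give $f\beta\simeq\beta g$ as $G$-maps $Y\to X$. For injectivity, suppose $[\alpha(x_1)]=[\alpha(x_2)]$ is witnessed by $\delta\colon\alpha(x_1)\to\alpha(x_2)$ with $\delta\simeq g\delta$. Then $\beta\delta$ is a path $\beta\alpha(x_1)\to\beta\alpha(x_2)$ satisfying $f(\beta\delta)\simeq\beta(g\delta)\simeq\beta\delta$, and concatenating $\beta\delta$ with the tracks of the homotopy $\beta\alpha\simeq\id_X$ at $x_1$ and $x_2$ produces a path $\rho\colon x_1\to x_2$. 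A routine (if slightly tedious) check using $f\beta\simeq\beta g$ and the compatibility of the homotopies shows $\rho\simeq f\rho$ rel endpoints, so $[x_1]=[x_2]$ in $\Fix^c(f)$.

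For surjectivity, given $y\in\Fix(g)$, the homotopy $f\beta\simeq\beta g$ evaluated at $y$ supplies a path $\eta_y\colon f\beta(y)\to\beta(y)$. Viewed as an element of the twisted loop space $\Loop_f X$, the near-fixed-point data $(\beta(y),\eta_y)$ corresponds under the bijection $\alpha_*\colon\pi_0(\Loop_f X)\to\pi_0(\Loop_g Y)$ (induced by the homotopy equivalence $\alpha$ via $\lambda\mapsto\alpha\lambda$) to the class of the constant loop at $y$. The remaining task is to extract from this class an actual fixed point $x\in\Fix(f)$ with $[\alpha(x)]=[y]$.

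The main obstacle is precisely this last step: promoting the near-fixed-point $(\beta(y),\eta_y)$ to an honest fixed point of $f$ in the same Reidemeister class. I would handle this by a Nielsen-theoretic straightening argument, deforming $\beta(y)$ along $\eta_y$ inside the appropriate cell of the $G$-CW structure on $X$ to locate a genuine fixed point of $f$; alternatively, one can appeal to the coefficient-wise identity $\alpha_*(R(f))=R(g)$ already established in part~3 of Proposition~\ref{prop.functoriality}, which, combined with the injectivity above, forces each nonempty class in $\Fix^c(g)$ to be hit.
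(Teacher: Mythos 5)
Your injectivity argument follows essentially the same route as the paper: choose a $G$-homotopy inverse $\beta$ of $\alpha$ and use the track of a homotopy $\beta\alpha\simeq\operatorname{id}_X$ at the fixed points to relate Nielsen witnesses for $f$ and $g$. The paper likewise produces the path $\lambda=H(-,x)$ from $x$ to $\beta\alpha(x)$ out of a homotopy $H\colon\operatorname{id}_X\simeq\beta\alpha$ and argues it is a Nielsen equivalence; this only verifies the one composite, i.e.\ injectivity of $[x]\mapsto[\alpha(x)]$. In particular the paper never checks that $[y]\mapsto[\beta(y)]$ is well-defined (it is not, since $\beta(y)$ need not be a fixed point of $f$) nor that $\alpha\beta$ is the identity on $\operatorname{Fix}^c(g)$, so the surjectivity you are worried about is not actually handled there either.

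The gap you acknowledge in the surjectivity step is therefore genuine, and the fallback you propose does not close it. The obstruction is exactly the one you name: $\beta$ is only a $G$-homotopy inverse, not a morphism in $\operatorname{End}(G\text{-}CW_{\mathrm{fp}})$, so $f\beta\simeq\beta g$ but not equality, and for $y\in\operatorname{Fix}(g)$ the point $\beta(y)$ is only a fixed point of $f$ ``up to a path''. You correctly convert this into a point of $\pi_0(\Loop_f X)$ lying over $[c_y]$ under $\alpha_*$, but there is no a priori reason that path-component of $\Loop_f X$ contains a constant path: those are precisely the empty (ghost) Reidemeister classes. The appeal to $\alpha_*(R(f))=R(g)$ cannot repair this, because the Reidemeister trace assigns coefficient zero to every class of index zero; a nonempty class $[y]\in\operatorname{Fix}^c(g)$ with $i(g,[y])=0$ contributes nothing to $R(g)$, so that identity says nothing about whether $[y]$ is in the image. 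To finish you would have to carry out the straightening honestly, e.g.\ $G$-homotope $\beta$ so that $f\beta=\beta g$ near $\operatorname{Fix}(g)$, or deform $f$ so that $\beta(y)$ becomes an actual fixed point in the intended class; this is a nontrivial extension argument that neither your proposal nor the paper supplies.
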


\begin{proof}
Since $ \alpha\colon (X,f) \to (Y,g) $ is an automorphism, the map $ \alpha\colon  X \to Y $ is a homotopy equivalence. Therefore, there exists a map $ \beta\colon Y \to X $, which satisfies $ \beta \circ \alpha \simeq \operatorname{id}_X $ and $ \alpha \circ \beta \simeq \operatorname{id}_Y $. Our goal is to show that there exists an inverse of the map $ \text{Fix}^c(f) \xrightarrow{\alpha} \text{Fix}^c(g) $, proving its bijectivity. We claim that the following map defined by $\beta$ provides the inverse. 
\begin{align*}
\beta: \text{Fix}^c(g) & \to \text{Fix}^c(f)\\
[y] &\mapsto [\beta(y)]
\end{align*}
Moreover, for all $ [x] \in \text{Fix}^c(f) $, we have 
\[
(\alpha \circ \beta)([x]) = [x].
\]
Let $ H\colon \text{I} \times X \to X $ be a homotopy from $ \operatorname{id}_X $ to $ \beta \circ \alpha $. Thus, there exists a path defined by
\begin{align*}
H(-,x)\colon \text{I} & \to X\\
t & \mapsto H(t,x). 
\end{align*}
This path connects the point $x$ to $\beta(\alpha(x))$. In fact, the path $H(-,x)$ is homotopic to the constant path $c_x$ at $x$. We define an explicit homotopy between them as follows.
\begin{align*}
    \tilde{H}\colon \text{I} \times \text{I} & \to X\\
    (t,s) & \mapsto \tilde{H}(t,s):= H(st,x)
\end{align*}
It is easy to check that $ \tilde{H}(0,s) = H(0,x) = c_x $, and $ \tilde{H}(1,s) = H(s,x) $ defines the path $ H(-,x) $. We denote this path by $ \lambda := H(-,x) $. As $ x $ is a fixed point of $ f $, we have
\[
f(\lambda) \simeq f(c_x)=c_x \simeq \lambda.
\]
This shows that $[x]$ and $[\beta(\alpha(x))]$ are in the same fixed point class. Thus, we obtain
\begin{align*}
\text{Fix}^c(f) &\xrightarrow{\alpha} \text{Fix}^c(g) \xrightarrow{\beta} \text{Fix}^c(f)\\
[x] & \mapsto [\alpha(x)] \mapsto [\beta(\alpha(x))]=[x]
\end{align*}
This gives the one-to-one correspondence.
\end{proof}

\section{Universality of Functorial Equivariant Lefschetz Invariants}\label{sec.universality}

In this section, we explain the universality property among all functorial equivariant Lefschetz invariants. We then explicitly compute the group in which the universal invariant lies under certain conditions. We begin by defining the universal functorial equivariant Lefschetz invariant and provide a brief overview of its construction.

\begin{definition}
A functorial equivariant Lefschetz invariant $(U,u)$ is called \textit{universal} if for any functorial equivariant Lefschetz invariant $(\Theta,\theta)$ there exits a unique family of natural transformations $\xi_G\colon U_G \to \Theta_G$ such that each group homomorphism satisfying that
\begin{align*}
\xi_G{(X,f)}\colon U_G(X,f) & \to \Theta_G(X,f)\\
u_G(X,f) & \mapsto \theta_G(X,f),
\end{align*}
for all $(X,f) \in \operatorname{End}(G\text{-}CW_{\text{fp}})$. Furthermore, the equality
\[
\xi_H\circ i_* = i_* \circ \xi_G
\]
holds for any inclusion $i\colon H\to G$ of discrete groups, where the induced maps $i_*$ are given as follows respectively.
\begin{align*}
i_*\colon U_H(X,f) \to U_G(\operatorname{ind}_i X, \operatorname{ind}_if)\\
i_*\colon \Theta_H(X,f) \to \Theta_G(\operatorname{ind}_i X, \operatorname{ind}_if).
\end{align*}
\end{definition}

The universal equivariant functorial Lefschetz invariant $u^{\Z}_G(X, f)$, defined by Weber \cite{Weber06}, lies in the abelian group $U^{\Z}_G(X, f)$. This abelian group is defined as follows.
\[ 
U^{\Z}_G (X,f) := K_0 (\phi\text{-end}_{\text{ff} \Z \Pi(G,X)}).
\]
This represents the Grothendieck group completion of $\phi$-endomorphisms of finite free $\Z\Pi(G, X)$-modules, where $\Z\Pi(G,X)$-modules and its $\phi$-endomorphisms are explained below.

To provide a precise construction of the universal invariant, we introduce essential notation and definitions, starting with the fundamental category. We refer to \cite{Lck1989TransformationGA} for a detail explanation. The \textit{fundamental category} $\Pi(G, X)$ is defined as follows:
\begin{itemize}
    \item Objects are the $G$-maps $x\colon G/H \to X$ for some subgroup $H \leq G$. We denote such an object by $X(H)$.
    \item Morphisms are the pairs $(\sigma, [w]) \in \mathrm{Mor}(x(H), y(K))$, consisting of a $G$-map $\sigma\colon G/H \to G/K$, and a homotopy class $[w]$ of a map $w\colon G/H \times I \to X$, relative to $G/H \times \partial I$, satisfying $w_1 = x$ and $w_0 = y \circ \sigma$.
\end{itemize}

We define a \textit{$\Z \Pi(G, X)$-module} as a contravariant functor from the fundamental category to the category of $\Z$-modules:
\[
M\colon \Pi(G,X) \to \Z\text{-Mod}.
\]
Let $\phi\colon \Pi(G, X) \to \Pi(G, X)$ be an endofunctor. A natural transformation 
$$
g\colon M \to M \circ \phi
$$
is called a \textit{$\phi$-endomorphism} of the $\Z \Pi(G, X)$-module $M$.

Now, to define the universal invariant, we consider the following contravariant universal covering functor, defined by Lück \cite[Chapter I, Section 8]{Lck1989TransformationGA}.
\begin{align*}
    \widetilde{X}\colon \Pi(G,X) & \to CW\\
    x(H) & \mapsto \widetilde{X^H(x)}\\
    x(H) \to y(K) & \mapsto \widetilde{X^K(y)} \to \widetilde{X^H(x)}.
\end{align*}
Here, $X^H(x)$ denotes the connected component of the fixed point set $X^H$ containing the element $x(1H) \in X^H$, and $\widetilde{X^H(x)}$ is the universal cover of $X^H(x)$.

We define a cellular $\Z \Pi(G, X)$-chain complex $C^c(\widetilde{X})$ as a contravariant functor given by
\[
C^c \circ \widetilde{X}\colon \Pi(G,X) \xrightarrow{\widetilde{X}} CW \xrightarrow{C^c} \Z\text{-}Ch,
\]
where $\widetilde{X}$ is the universal cover functor. This functor yields a finite free $\Z \Pi(G, X)$-chain complex. 

It is clear that any $G$-equivariant map $f\colon  X \to X$ induces an endofunctor
$$
\phi := \Pi(G, f) \colon \Pi(G, X) \to \Pi(G, X),
$$
defined on objects by
$$
x(H) \mapsto f \circ x(H),
$$
and on morphisms by
$$
(\sigma, [w]) \mapsto (\sigma, [f \circ w]).
$$

Consequently, the map $f\colon X \to X$ induces a $\phi$-endomorphism $C^c(\widetilde{f})$ of the cellular $\Pi(G, X)$-chain complex $C^c \circ \widetilde{X}$. This defines the \textit{universal equivariant functorial Lefschetz invariant} $u^\Z_G(X,f)$, which is constructed as the alternating sum of $C^c(\widetilde{f})$ on the cellular $\Z \Pi(G, X)$-chain complex $C^c(\widetilde{X})$, as defined below.
\[
u^{\Z}_G(X,f) := u(C^c(\widetilde{f})).
\]

The group $U^{\mathbb{Z}}_G(X, f)$ associated to an object $(X, f) \in \operatorname{End}(G\text{-}\CWfp)$, where the universal invariant lies, is defined in terms of $K$-groups. Although it is known that these groups are abelian, we have limited information about these “universal” groups. This naturally raises the question: can we explicitly compute the group $U^{\mathbb{Z}}_G(X, f)$?

In general, computing $K$-groups is a difficult task. In our context, the category $\phi\text{-}\mathrm{end}_{\mathrm{ff}\, \mathbb{Z} \Pi(G, X)}$ denotes the category of $\phi$-twisted endomorphisms of finite free $\mathbb{Z} \Pi(G, X)$-modules. Therefore, we restrict our attention for now to the most fundamental case.

We consider the non-equivariant version of $U^\Z_G(X,f)$ with simply-connected spaces. Unraveling the definition of $U^\Z(X,f)=K_0 (\phi\text{-end}_{\text{ff} \Z \Pi(X)})$, one can obtain that this group is isomorphic to the abelian group $U(\Z)$, generated by elements $[A]$, where $A$ is a square matrix with entries in $\Z$. The group satisfies the following relations:

\begin{itemize}
    \item If $A=\begin{pmatrix} B&C \\ 0&D \end{pmatrix}$ for square matrices $B$ and $D$, then $[A] = [B] + [D]$.
    
    \item If $U$ is an invertible matrix over $\Z$ of the same dimension as $A$, then $[UAU^{-1}] = [A]$.
\end{itemize}

This group $U(\Z)$ was defined by Lück \cite{lueck1999}, who developed the universal theory of functorial Lefschetz invariants for the trivial group, before Weber generalized it to the equivariant case \cite{Weber06}.

Even in this basic case, the conjugacy problem in matrix groups remains challenging due to the integer entries. A classical result addressing this problem, due to Latimer and MacDuffee \cite{Latimer1933ACB} and Taussky \cite{Taussky_1949}, is stated as follows.

\begin{theorem}
There is a one-to-one correspondence between the conjugacy classes of integer matrices $A$ satisfying $f(A) = 0$, and the ideal classes of the ring $\mathbb{Z}[\theta]$, where $f(\lambda)$ is a monic polynomial of degree $n$ with integer coefficients that is irreducible over $\mathbb{Q}$, and $\theta$ is a root of $f(\lambda)$.
\end{theorem}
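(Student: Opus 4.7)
The plan is to interpret each conjugacy class via the $\Z[\theta]$-module structure it induces on $\Z^n$, and then to identify such modules with fractional ideals of $\Z[\theta]$. Given $A\in M_n(\Z)$ with $f(A)=0$, I turn $\Z^n$ into a $\Z[\theta]$-module $M_A$ by declaring $\theta\cdot v := Av$; this is well-defined because $\Z[\theta]\cong \Z[x]/(f)$ and $f(A)=0$. Since $f$ is monic and irreducible over $\Q$, the ring $\Z[\theta]$ is an order in the number field $\Q(\theta)$, and both $\Z[\theta]$ and $\Z^n$ have $\Z$-rank $n$. Tensoring with $\Q$ shows $M_A\otimes_\Z \Q\cong \Q(\theta)$ as a $\Q(\theta)$-module, so $M_A$ is a torsion-free, finitely generated, rank-one $\Z[\theta]$-module.

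Next I would show that every such module is isomorphic to a fractional ideal of $\Z[\theta]$: embed $M_A\hookrightarrow M_A\otimes_\Z \Q\cong \Q(\theta)$, and the image $I_A$ is a finitely generated $\Z[\theta]$-submodule of $\Q(\theta)$, hence a fractional ideal. The assignment $A\mapsto [I_A]$ then descends to classes because a conjugacy $A'=UAU^{-1}$ produces an explicit $\Z[\theta]$-module isomorphism $M_A\cong M_{A'}$, while conversely two fractional ideals $I,I'$ are isomorphic as $\Z[\theta]$-modules precisely when $I'=\xi I$ for some $\xi\in\Q(\theta)^\times$: any $\Z[\theta]$-linear map $I\to I'$ extends to a $\Q(\theta)$-linear endomorphism of $\Q(\theta)$, which is multiplication by some $\xi\in\Q(\theta)^\times$. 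This yields injectivity of the induced map from conjugacy classes of matrices to ideal classes.

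For surjectivity, given an ideal $I$, I choose any $\Z$-basis $(e_1,\dots,e_n)$ of $I$ (which exists since $I$ is a finitely generated torsion-free abelian group of $\Z$-rank $n$), and let $A$ be the integer matrix representing multiplication by $\theta$ on $I$ in this basis; then $f(A)=0$ and $I_A\cong I$, and a change of basis replaces $A$ by a $\mathrm{GL}_n(\Z)$-conjugate, so the conjugacy class is well-defined. The main subtlety I anticipate is that when $\Z[\theta]$ is not the full ring of integers of $\Q(\theta)$, the ideal classes form only a monoid, not a group, and some fractional ideals fail to be invertible; one must therefore check that the argument above still produces a bijection with the full set of classes under $I\sim \xi I$, which is why the statement is phrased in terms of ideal classes of $\Z[\theta]$ itself rather than of its integral closure.
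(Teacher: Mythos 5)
The paper does not prove this theorem; it cites the result as the classical Latimer--MacDuffee--Taussky correspondence and refers the reader to \cite{Latimer1933ACB} and \cite{Taussky_1949}. So there is no internal argument to compare against, and what you have written is an independent proof of the cited result.

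Your argument is the standard modern module-theoretic proof, and it is correct. The key reformulation --- that a matrix $A\in M_n(\Z)$ with $f(A)=0$ is the same data as a $\Z[\theta]$-module structure on $\Z^n$ that is faithful, torsion-free, and of $\Z[\theta]$-rank one --- is exactly the right one, and your tensoring-with-$\Q$ argument correctly identifies the rank. The injectivity and surjectivity steps are both sound: a $\Z[\theta]$-module isomorphism $M_A\to M_{A'}$ is an element $U\in\mathrm{GL}_n(\Z)$ with $UA=A'U$, and conversely any $\Z[\theta]$-linear map between nonzero fractional ideals inside $\Q(\theta)$ extends uniquely to a $\Q(\theta)$-linear endomorphism, hence is multiplication by a scalar. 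Your final remark about non-maximal orders is the right caution: the correspondence is with the set of isomorphism classes of \emph{all} nonzero fractional $\Z[\theta]$-ideals under $I\sim\xi I$ (equivalently, isomorphism classes of torsion-free rank-one $\Z[\theta]$-modules), not with the Picard group of invertible ideals, and your argument does in fact treat all such ideals uniformly without ever invoking invertibility. One small point worth making explicit if you polish this: $f(A)=0$ together with $f$ irreducible of degree $n$ forces the characteristic polynomial of $A$ to equal $f$ (the minimal polynomial divides $f$ and is nonconstant, hence equals $f$; then Cayley--Hamilton and degree count finish it), so the hypothesis is equivalent to the usual ``characteristic polynomial $f$'' phrasing and the modules $M_A$ really are faithful of rank one.
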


We will apply the theorem above to compute the group $U(\mathbb{Z})$. Before doing so, we first consider the following result on the conjugacy of matrices over $\mathbb{Z}$, which can be found in \cite[Chapter \RN{3}, Section 15]{newman1972integral}.

\begin{theorem}
Let $A \in M_n(\mathbb{Z})$ be any integer matrix. Then $A$ is conjugate over $\mathbb{Z}$ to a block upper triangular matrix
$$
\begin{pmatrix}
A_{11} & A_{12} & \cdots & A_{1p} \\
0 & A_{22} & \cdots & A_{2p} \\
\vdots & \vdots & \ddots & \vdots \\
0 & 0 & \cdots & A_{pp}
\end{pmatrix}
$$
where each diagonal block $A_{ii}$ has a characteristic polynomial that is irreducible over $\mathbb{Q}$, for all $1 \leq i \leq p$.
\end{theorem}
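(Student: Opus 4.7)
The plan is to prove this by induction on $n$, at each step peeling off a single $A$-invariant pure sublattice on which $A$ acts with irreducible characteristic polynomial. The base case $n=1$ is trivial since any integer is its own $1\times 1$ block with linear characteristic polynomial.

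For the inductive step, I begin by picking an irreducible factor $p(x) \in \mathbb{Z}[x]$ of the minimal polynomial of $A$ over $\mathbb{Q}$ (which lives in $\mathbb{Z}[x]$ by Gauss's lemma), say of degree $d$. Since $p$ divides the minimal polynomial, $\ker p(A)$ is a nontrivial subspace of $\mathbb{Q}^n$, so after clearing denominators I obtain a nonzero vector $v \in \mathbb{Z}^n$ with $p(A)v = 0$. Because $p$ is irreducible, the minimal annihilating polynomial of $v$ is exactly $p(x)$, and so $v, Av, \ldots, A^{d-1}v$ are $\mathbb{Q}$-linearly independent.

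Next I define $L := \mathbb{Z}\langle v, Av, \ldots, A^{d-1}v\rangle \subseteq \mathbb{Z}^n$, a rank-$d$ $A$-invariant sublattice. The main subtlety is that $L$ need not be pure (saturated), so I pass to its saturation
\[
L^* := (L \otimes \mathbb{Q}) \cap \mathbb{Z}^n.
\]
This lattice is still $A$-invariant: for $w \in L^*$ we have $w \in L\otimes\mathbb{Q}$, which is $A$-invariant, while $Aw \in \mathbb{Z}^n$ since $A$ has integer entries. Because $L^*\otimes\mathbb{Q} = L\otimes\mathbb{Q}$ has dimension $d$ and $p$ annihilates $A$ on this subspace while $\deg p = d$, the characteristic polynomial of $A|_{L^*}$ is exactly $p(x)$, which is irreducible.

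Now I choose any $\mathbb{Z}$-basis of $L^*$ and extend it to a $\mathbb{Z}$-basis of $\mathbb{Z}^n$, which is possible precisely because $L^*$ is pure so that $\mathbb{Z}^n / L^*$ is free. In this basis, $A$ takes the form $\begin{pmatrix} A_{11} & B \\ 0 & A' \end{pmatrix}$ with $A_{11}$ having characteristic polynomial $p(x)$ and $A' \in M_{n-d}(\mathbb{Z})$ describing the induced action on the quotient. By the inductive hypothesis, there is $U \in GL_{n-d}(\mathbb{Z})$ with $UA'U^{-1}$ in the desired block upper triangular form; conjugating the full matrix by $\begin{pmatrix} I & 0 \\ 0 & U\end{pmatrix}$ preserves the upper triangular block shape (only altering the off-diagonal $B$ block) and completes the proof.

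The technical heart of this argument is the purification step: verifying that $L^*$ is $A$-invariant and that the restriction has the correct irreducible characteristic polynomial, rather than some larger power of $p$. The remaining ingredients (extending a basis of a pure sublattice, and carrying through the induction via block conjugation) are standard lattice-theoretic facts that I expect to handle routinely.
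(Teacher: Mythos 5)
The paper does not prove this theorem: it is stated with a citation to Newman's \emph{Integral Matrices} (Chapter \RN{3}, Section 15) and used as a black box, so there is no in-paper argument to compare against. Your proposal is a correct, self-contained proof of the cited result, and it is essentially the standard lattice-theoretic argument. All the delicate points check out: the minimal polynomial of $A$ is monic in $\Z[x]$ by Gauss's lemma, so its irreducible factors may be taken monic in $\Z[x]$; $p(A)$ is singular because $p$ divides the minimal polynomial, giving an integral vector $v$ with $p(A)v=0$; the cyclic lattice $L$ is $A$-invariant of rank $d=\deg p$ precisely because $p$ is monic with integer coefficients; the saturation $L^*$ is $A$-invariant because $L\otimes\Q$ is $A$-invariant and $A$ preserves $\Z^n$; and $A|_{L^*}$ has characteristic polynomial exactly $p$ because $p(A)$ annihilates $L^*\otimes\Q$, which is $d$-dimensional, so the minimal and characteristic polynomials both equal $p$. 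The purity of $L^*$ ensures $\Z^n/L^*$ is free, which is exactly what is needed to extend a basis of $L^*$ to one of $\Z^n$, and the final conjugation by $\bigl(\begin{smallmatrix} I & 0 \\ 0 & U \end{smallmatrix}\bigr)$ cleanly closes the induction. You correctly identified saturation as the one nontrivial step; everything else is routine. No gap.
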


From the results above, we conclude that if $[A] \in U(\Z)$, then  
\[
[A] = [A_{11}] + \ldots + [A_{pp}],
\]
for some matrices $ A_{ii} $, and each of which has irreducible characteristic polynomial over $ \mathbb{Q} $. Moreover, each $ A_{ii} $ has finitely many conjugacy classes, corresponding to the ideal classes of the ring $ \Z[\theta] $, where $ \theta $ is a root of the characteristic polynomial of the matrix $A_{ii}$.

It is well known that two matrices with irreducible characteristic polynomials over a field lie in the same conjugacy class over that field if and only if they have the same characteristic polynomial. Since each element $[A] \in U(\mathbb{Z})$ can be expressed as a sum of classes of matrices whose characteristic polynomials are irreducible over $\mathbb{Q}$, a key question in determining whether the universal group $U(\mathbb{Z})$ admits additional relations is the following:

\begin{question}
Let $A_1$ and $A_2$ are conjugate over $\Q$ but not over $\Z$, and suppose their characteristic polynomials are irreducible over $\Q$. Are there matrices $B_1, B_2$ and $D$ such that 
\begin{align*}
    \begin{pmatrix}
    A_1 & B_1\\
    0 & D        
    \end{pmatrix} 
\hspace{0.1cm} \text{and }
    \begin{pmatrix}
    A_2 & B_2\\
    0 & D        
    \end{pmatrix}
\end{align*}
are conjugate over $\Z$?
\end{question}

We have determined that the answer to this question is affirmative, by the following lemma. This lemma enables us to explicitly describe the structure of the group $U(\Z)$. 

\begin{lemma} \label{conj.ideas}
Let $A$ and $B$ be two integer matrices over $\Z$, and suppose they are conjugate over $\Q$, but not over $\Z$. That is, there exists a matrix $U$, which is invertible over $\Q$ such that
\[
B=UAU^{-1}.
\]
Then, the classes of the matrices are equal in the universal group $U(\Z)$, i.e.,
\[
[A]=[B].
\]
\end{lemma}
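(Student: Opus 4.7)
The plan is to proceed by induction on the matrix size $n$. The base case $n=1$ is immediate, since two rationally conjugate scalars are equal. For the inductive step ($n \geq 2$), I first set $V := dU \in M_n(\Z)$ for a positive integer $d$ chosen so that $V$ has integer entries. Then $VA = BV$ and $V$ remains invertible over $\Q$. Dividing $V$ by the greatest common divisor of its entries preserves the identity $VA = BV$, so I may assume that the first invariant factor of $V$ equals $1$. Consequently the finite abelian group $Q := \operatorname{coker}(V \colon \Z^n \to \Z^n)$ admits a minimal number of $\Z$-generators $k$ with $k \leq n-1$.

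The key step is a pullback construction. Choose a minimal free resolution
\[
0 \to \Z^k \xrightarrow{T} \Z^k \to Q \to 0
\]
over $\Z$, and lift the induced $B$-action on $Q$ to an integer matrix $g_0 \in M_k(\Z)$; the relation $g_0 T = T g_1$ then determines $g_1 \in M_k(\Z)$ uniquely. Define
\[
P := \{(m,z) \in \Z^n \oplus \Z^k : m \text{ and } z \text{ have the same image in } Q\},
\]
a free $\Z$-module of rank $n+k$ equipped with the endomorphism $(m,z) \mapsto (Bm, g_0 z)$. The two coordinate projections from $P$ yield short exact sequences of finite free $\Z[x]$-modules
\[
0 \to (\Z^n, A) \to (P,\cdot) \to (\Z^k, g_0) \to 0
\quad\text{and}\quad
0 \to (\Z^k, g_1) \to (P,\cdot) \to (\Z^n, B) \to 0.
\]
Applying the additivity (upper block triangular) relation in $U(\Z)$ to each sequence gives $[A] + [g_0] = [P] = [g_1] + [B]$, whence $[A] - [B] = [g_1] - [g_0]$.

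To conclude, observe that $g_0, g_1 \in M_k(\Z)$ are $\Q$-conjugate via $T$, and $k < n$, so the inductive hypothesis yields $[g_0] = [g_1]$ in $U(\Z)$. Hence $[A] = [B]$, completing the induction.

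The subtle point will be the identification of the kernel of the first projection $P \to \Z^n$ with $(\Z^k, g_1)$ rather than $(\Z^k, g_0)$: elements of this kernel are of the form $(0, Tz)$ for $z \in \Z^k$, and in the reparametrized coordinate $z$ the $x$-action is governed by $T^{-1} g_0 T = g_1$. This twist by $T$ is precisely what produces two genuinely different short exact sequences and gives the nontrivial recursive identity $[A] - [B] = [g_1] - [g_0]$ that makes the induction close.
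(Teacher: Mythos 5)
Your proof is correct but takes a genuinely different route from the paper's. Both arguments reduce by induction on the matrix size to a comparison of strictly smaller $\Q$-conjugate integer matrices, but the reductions run in opposite directions. The paper stays in size $n$: after scaling $U$ to an integer matrix with coprime entries, it conjugates a $2n \times 2n$ block upper-triangular matrix with diagonal blocks $A$ and $B - UX$ to one with diagonal blocks $A - XU$ and $B$, then chooses $X$ so that $A - XU$ annihilates a primitive vector, peels off a $1 \times 1$ zero block, and inducts on the $(n-1)\times(n-1)$ remainders $D_1$, $D_2$. You instead enlarge: the pullback $P$, free of rank $n+k$ with $k \le n-1$ the number of generators of $\operatorname{coker}(V)$, carries two filtrations with successive quotients $(A, g_0)$ and $(g_1, B)$, giving $[A] - [B] = [g_1] - [g_0]$ for $g_0, g_1 \in M_k(\Z)$ that are manifestly $\Q$-conjugate by $T$. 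Your inductive step is in fact cleaner: the $\Q$-conjugacy of $g_0$ and $g_1$ is built in ($g_1 = T^{-1}g_0 T$), whereas the paper's parallel claim that $D_1$ and $D_2$ are $\Q$-conjugate is asserted without argument and really needs the observation that the overall conjugating matrix carries one chosen kernel vector to a rational multiple of the other. Two details worth writing out explicitly in your version: $P$ is free of rank $n+k$ because $(\Z^n \oplus \Z^k)/P \cong Q$ is finite; and each of your short exact sequences splits over $\Z$ (the quotient being free), which is what exhibits the endomorphism of $P$ as block upper-triangular in an adapted basis and hence yields the additivity relation in $U(\Z)$.
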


\begin{proof}
Without loss of generality, we can assume that $ U $ is an integer matrix and $ U^{-1} $ is a rational matrix, such that the integer matrices $ A $ and $ B=UAU^{-1} $ are conjugate over $ \mathbb{Q} $ but not over $ \mathbb{Z} $.

Let $ X $ be an $n\times n$ integer matrix. Then, observe that the following block matrices  

\[
\begin{pmatrix}
     A & X\\
     0 & UAU^{-1} - UX
\end{pmatrix}
\quad \text{and} \quad 
\begin{pmatrix}
     A - XU & X\\
     0 & UAU^{-1}
\end{pmatrix}
\]  
are $ \mathbb{Z} $-conjugate by the matrix
\[
\begin{pmatrix}
     \mathrm{I} & 0\\
     U & \mathrm{I}
\end{pmatrix}.
\]  
This is because the inverse matrix is given by 

\[
\begin{pmatrix}
     \mathrm{I} & 0\\
     U & \mathrm{I}
\end{pmatrix}^{-1} =  
\begin{pmatrix}
     \mathrm{I} & 0\\
     -U & \mathrm{I}
\end{pmatrix}.
\]  
Furthermore, in the universal group $ U(\mathbb{Z}) $, these block matrices are given by the following relations:

\[
\begin{bmatrix}
     A & X\\
     0 & UAU^{-1} - UX
\end{bmatrix} = [A] + [UAU^{-1} - UX],
\]  

\[
\begin{bmatrix}
     A - XU & X\\
     0 & UAU^{-1}
\end{bmatrix} = [A - XU] + [UAU^{-1}].
\]  
Since the matrices are $ \mathbb{Z} $-conjugate, they belong to the same class in $ U(\mathbb{Z}) $. Therefore, we have the following equality.
\[
[A] + [UAU^{-1} - UX] = [A - XU] + [UAU^{-1}].
\]  
In addition, note that 
\[
U(A-XU)U^{-1}= UAU^{-1}-UX.
\]
Thus, $ A - XU $ and $ UAU^{-1} - UX $ are both integer matrices, and they are conjugate over $ \mathbb{Q} $. Our goal is to show that they are equivalent in $ U(\mathbb{Z}) $; once this is established, the proof will be complete.

One can choose the integer matrix $ U $ such that its entries have no common divisor, without loss of generality. It is then clear that the image of $ U $ contains a primitive vector, which is not divisible by any integer other than $ 1 $.

Let $ U = (\vec{v_1}, \ldots, \vec{v_n}) $, where $ \{ \vec{v_1}, \ldots, \vec{v_n} \} $ is a $ \mathbb{Z} $-basis. Then, there exists a vector $ v = (x_1, \ldots, x_n) $ such that  
\[
Uv = \left( \sum_{i=1}^n v_i^1x_i, \ldots, \sum_{i=1}^n v_i^n x_i \right)
\]
is primitive, where $ \vec{v_i} = (v_i^1, \ldots, v_i^n) $ for all $ i $. Such a primitive vector can be extended to a $ \mathbb{Z} $-basis of $ \mathbb{Z}^n $ since we know that $ \mathbb{Z}^n / \langle Uv \rangle \cong \mathbb{Z}^{n-1} $.

Note that since $ Uv $ is a basis vector, we can choose the integer matrix $ X $ such that it maps the primitive vector $ Uv $ to $ Av $, i.e.,
\[
XUv = Av.
\]
It follows that $ (A - XU)v = 0 $, and therefore, $ A - XU $ has a kernel. Consequently, $ UAU^{-1} - UX $ also has a kernel. Thus, we can $ \mathbb{Z} $-conjugate both $ A - XU $ and $ UAU^{-1} - UX $ into the following form
\[
\begin{pmatrix}
    0 & *\\
    0 & D
\end{pmatrix}.
\]
Now, we assume that $ A - XU $ is $ \mathbb{Z} $-conjugate to  

\[
\begin{pmatrix}
    0 & *\\
    0 & D_1
\end{pmatrix},
\]
and $ UAU^{-1} - UX $ is $ \mathbb{Z} $-conjugate to  

\[
\begin{pmatrix}
    0 & *\\
    0 & D_2
\end{pmatrix}.
\]
Since 

\[
\begin{pmatrix}
    0 & *\\
    0 & D_1
\end{pmatrix}
\quad \text{and} \quad
\begin{pmatrix}
    0 & *\\
    0 & D_2
\end{pmatrix}
\]
are $ \mathbb{Q} $-conjugate, $ D_1 $ and $ D_2 $ are also $ \mathbb{Q} $-conjugate. By applying the same process to the smaller matrices $ D_1 $ and $ D_2 $ and proceeding by induction, we eventually reduce them to the form  

\[
\begin{pmatrix}
    0 & *\\
    0 & d
\end{pmatrix},
\]
where $ d \in \mathbb{Z} $. Therefore, the matrices $ D_1 $ and $ D_2 $ are equivalent in $ U(\mathbb{Z}) $. Consequently, we have the following equalities.
\[
[A - XU] = [0] + [D_1] = [0] + [D_2] = [UAU^{-1} - UX].
\]
Thus, it follows that  
\[
[A] = [UAU^{-1}],
\]
which completes the proof.
\end{proof}

By applying Lemma \ref{conj.ideas} and using the fact that two matrices with irreducible characteristic polynomials over $ \mathbb{Q} $ are conjugate over $ \mathbb{Q} $ if and only if they have the same characteristic polynomial, we arrive at the following conclusion. 

\begin{theorem}\label{U(X,f)}
Let $X$ be a simply-connected space. The group $U^\Z(X,f)$, defined as the $K$-group
$$
K_0(\phi\text{-}\mathrm{end}_{\mathrm{ff} \, \Z \Pi(X)}),
$$
in which the universal Lefschetz invariant takes values, is independent of the choice of the space $X$ and the map $f$. Moreover, it is isomorphic to the group $U(\Z)$, which is the free abelian group generated by the set of irreducible characteristic polynomials over $\mathbb{Q}$ of integer matrices. That is,
$$
U(\Z) \cong \Z \left[ \{P \in \Z[x] \mid P \text{ is irreducible over } \Q, P(A) = 0 \text{ for some } A \in M_n(\Z)\} \right].
$$
\end{theorem}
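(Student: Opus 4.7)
The first task is to justify the reduction already announced in the discussion preceding the theorem, namely $U^{\Z}(X,f) \cong U(\Z)$. Since $X$ is simply-connected (and $G$ is trivial), the fundamental category $\Pi(X)$ collapses, in the sense relevant to chain complex representations, to a single object with endomorphism monoid reflecting only the homotopy class of $f$. Consequently, a finite free $\Z\Pi(X)$-module is nothing more than a finitely generated free $\Z$-module, and a $\phi$-endomorphism is simply a $\Z$-linear endomorphism, i.e.\ an integer matrix. Passing to $K_0$ of this endomorphism category produces exactly the abelian group generated by symbols $[A]$ for square integer matrices, subject to the two relations ($\Z$-conjugation invariance and additivity along block upper triangular decompositions) written down just before Lemma \ref{conj.ideas}. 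This identifies $U^{\Z}(X,f)$ with $U(\Z)$ and shows at the same time that the right-hand side does not depend on $(X,f)$.

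The remaining and main content of the theorem is the identification $U(\Z)\cong F$, where $F$ denotes the free abelian group on the set $\mathcal{P}$ of monic polynomials in $\Z[x]$ that are irreducible over $\Q$ and arise as the characteristic polynomial of some integer matrix. I propose constructing mutually inverse homomorphisms $\Phi\colon U(\Z)\to F$ and $\Psi\colon F\to U(\Z)$. The map $\Phi$ is defined on generators by
\[
\Phi([A]) \;=\; \sum_{i} e_i\, P_i, \qquad \text{where } \chi_A(x)=\prod_i P_i(x)^{e_i}
\]
is the factorization of the characteristic polynomial of $A$ into irreducibles over $\Q$. To see $\Phi$ descends to $U(\Z)$, one checks that $\chi$ is conjugation-invariant (so $\Phi([UAU^{-1}])=\Phi([A])$) and multiplicative under block triangular form (so $\Phi$ respects $[A]=[B]+[D]$). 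The map $\Psi$ sends an irreducible $P\in\mathcal{P}$ to $[C_P]$, the class of its companion matrix; since $F$ is free on $\mathcal{P}$ this extends uniquely to a homomorphism.

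The composition $\Phi\circ\Psi$ is clearly the identity on $F$: the companion matrix $C_P$ has characteristic polynomial $P$, which is already irreducible. For $\Psi\circ\Phi=\mathrm{id}_{U(\Z)}$, take any $[A]\in U(\Z)$ and invoke the block triangular decomposition theorem quoted from \cite{newman1972integral}: there exist diagonal blocks $A_{11},\dots,A_{pp}$ with each $\chi_{A_{jj}}$ irreducible over $\Q$, giving $[A]=\sum_j [A_{jj}]$ in $U(\Z)$. Now each $A_{jj}$ and the companion matrix $C_{\chi_{A_{jj}}}$ have the same irreducible characteristic polynomial, hence are $\Q$-conjugate (a standard rational canonical form fact), and therefore $[A_{jj}]=[C_{\chi_{A_{jj}}}]$ in $U(\Z)$ by Lemma \ref{conj.ideas}. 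Collecting terms according to which irreducible polynomial appears as $\chi_{A_{jj}}$, and using that the product $\prod_j \chi_{A_{jj}}=\chi_A=\prod_i P_i^{e_i}$ forces each $P_i$ to appear as $\chi_{A_{jj}}$ exactly $e_i$ times, yields $[A]=\sum_i e_i [C_{P_i}] = \Psi(\Phi([A]))$.

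The genuinely delicate input has already been supplied: Lemma \ref{conj.ideas} is what allows us to promote $\Q$-conjugacy of the diagonal blocks to equality in $U(\Z)$, and without it the map $\Psi\circ\Phi$ would only be surjective up to the finer Latimer--MacDuffee--Taussky ideal class information. With that lemma in hand, the only remaining subtlety in the plan is a bookkeeping check that multiplicities of irreducible factors in $\chi_A$ agree with the multiplicities produced by the block triangular decomposition; this is a consequence of unique factorization in $\Q[x]$ and causes no difficulty. Thus the theorem follows.
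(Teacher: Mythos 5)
Your proposal is correct and follows essentially the same route as the paper: the key ingredients are Newman's block-triangularization into blocks with $\Q$-irreducible characteristic polynomial, the rational canonical form fact that two integer matrices sharing the same irreducible characteristic polynomial are $\Q$-conjugate, and Lemma~\ref{conj.ideas} to promote $\Q$-conjugacy to equality in $U(\Z)$. The paper leaves the final isomorphism implicit in the discussion preceding the theorem, so your explicit mutually inverse maps $\Phi$ and $\Psi$ are a welcome elaboration of the same argument rather than a different approach.
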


\section{Realization Problem}\label{sec.realization}

In this section, we aim to address the \textit{realization problem}, which asks whether, for any $ [A] \in U^\mathbb{Z}_G(X,f) $, there exists a self-map $ f $ such that the universal functorial equivariant Lefschetz invariant $ u^\mathbb{Z}_G(X,f) $ is equal to the element $ [A] $.

As explained in the previous chapter, it is quite challenging to provide an explicit computation of the abelian groups $ U^\mathbb{Z}_G(X,f) $. On the other hand, we gave an answer to the group $ U^\mathbb{Z}(X,f) $ in Theorem \ref{U(X,f)} when $ X $ is simply-connected and non-equivariant. Since this group consists of matrices, we were able to find a solution to the realization problem in this case.

We know that when $X$ is simply-connected, and non-equivariant, then the universal group $U^\Z(X,f)$ is isomorphic to $U(\Z)$, which is independent of the choice of space $X$, as $\pi_1(X,*)$ is trivial. Clearly, when $X$ is contractible, the universal invariant always equal to $u^\Z(X,f)=[1]$ since it is a homotopy invariance. On the other hand, we will show that there exists a simply-connected space $X$ and a self-map $f \colon X \to X$ such that $u^\mathbb{Z}(X,f) = [A]$ for any given element $[A] \in U^\mathbb{Z}(X,f)$. This provides a complete solution to the realization problem in the simply-connected, non-equivariant case.

\begin{theorem}\label{thm.realization}
Let $U(\Z)$ be the abelian group, defined by $K_0 (\phi\operatorname{-end}_{\operatorname{ff}\Z })$. Then, for any $[A] \in U(\Z)$, there exits a simply-connected space $X$, and a self-map on $X$ such that $u^\Z(X,f)=[A]$.
\end{theorem}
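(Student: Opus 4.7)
The plan is to realize any element of $U(\Z)$ explicitly as a universal invariant by constructing a wedge of spheres with an appropriate self-map. First, I would observe that for a simply-connected space $X$, the universal cover functor $\widetilde{X}$ reduces to $X$ itself, and the fundamental category $\Pi(X)$ is equivalent to the trivial groupoid. Consequently, $\Z\Pi(X)$-modules are ordinary $\Z$-modules, the cellular $\Z\Pi(X)$-chain complex $C^c(\widetilde{X})$ coincides with the ordinary cellular chain complex $C_*(X)$, and the universal invariant takes the concrete form
\[
u^\Z(X,f) = \sum_i (-1)^i [C_i(f)] \in U(\Z),
\]
where $C_i(f) \colon C_i(X) \to C_i(X)$ is the induced map on the $i$-th cellular chain group.

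Next, since $U(\Z)$ is a Grothendieck group of endomorphisms of finite free $\Z$-modules, every element can be written as a formal difference $[A] - [B]$ for integer matrices $A \in M_n(\Z)$ and $B \in M_m(\Z)$ (allowing $n$ or $m$ to be zero). Given such a representation, I would take
\[
X := \bigvee^n S^2 \; \vee \; \bigvee^{m+1} S^3,
\]
which is simply-connected and carries a CW structure with one $0$-cell, $n$ two-cells, and $m+1$ three-cells, all cellular differentials being zero. I then construct a based self-map $f \colon X \to X$ by: sending the $n$ summands $S^2$ into $\bigvee^n S^2$ via the map realizing $A$ on $\pi_2 \cong \Z^n$ (which exists by Hurewicz, since the wedge of $2$-spheres is simply-connected); sending the first $m$ summands $S^3$ into $\bigvee^m S^3$ via the map realizing $B$ on $\pi_3 \cong \Z^m$; and sending the final $S^3$ summand to itself by the identity.

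The induced maps on cellular chains are then $C_0(f) = 1$, $C_2(f) = A$, and $C_3(f) = B \oplus 1$. Applying the upper-triangular relation from Theorem \ref{U(X,f)} gives $[B \oplus 1] = [B] + [1]$, whence
\[
u^\Z(X,f) = [1] + [A] - ([B] + [1]) = [A] - [B],
\]
as required. The key design choice is the final $S^3$ summand whose identity contribution precisely cancels the unavoidable $[1]$ coming from the $0$-cell; without this, one would obtain $[A] - [B]$ shifted by a spurious $[1]$. The main technical obstacle I anticipate is the verification that prescribed integer matrices can be realized as based self-maps of wedges of spheres, but this is standard: by Hurewicz, $\pi_k(\bigvee^N S^k) \cong \Z^N$ for $k \geq 2$, so any matrix in $M_N(\Z)$ arises as the induced map on $\pi_k$ (and hence on $C_k$) of some based self-map.
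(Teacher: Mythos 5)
Your proposal is correct and takes essentially the same approach as the paper: realize the two matrices as induced maps on $H_2$ and $H_3$ of a self-map of a wedge of $2$- and $3$-spheres, then cancel the spurious $[1]$ coming from the $0$-cell. The only cosmetic difference is that the paper absorbs the cancellation by choosing $B$ with $[B]=[1]+[B']$, whereas you add an explicit extra $S^3$ summand mapped by the identity; these are the same device.
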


\begin{proof}
Let $ X = \bigvee_{i=1}^n S_i^2 \vee \bigvee_{j=1}^m S^3_j $, where each $ S^2_i $ and $ S^3_j $ are 2-dimensional and 3-dimensional spheres, respectively.

We define the map $ f: X \to X $ as follows. The $ i $th sphere $ S^2_i $ wraps around all the $ S^2 $ spheres $ a_1^i, \ldots, a_n^i $-many times, respectively. Similarly, $ S^3_j $ wraps around all the $ S^3 $ spheres $ b_1^j, \ldots, b_m^j $-many times. In terms of homology, the induced map  
\[
f_*: H_2(X) \to H_2(X)
\]  
is given by mapping the $ i $th generator to $ (a_1^i, \ldots, a_n^i) \in H_2(X) \cong \mathbb{Z}^n $.  
Furthermore, the induced map  
\[
f_*: H_3(X) \to H_3(X)  
\]  
is defined by mapping the $ j $th generator to $ (b_1^j, \ldots, b_m^j) \in H_3(X) \cong \mathbb{Z}^m $.

Then, the cellular chain complex of $X$ is given as follows.
\begin{align*}
    C_0(X)&=\langle e_0 \rangle\\
    C_1(X)&=\emptyset\\
    C_2(X)&= \langle e_2^1,\ldots, e^n_2\rangle\\
    C_3(X)&=\langle e_3^1, \ldots, e_3^m\rangle
\end{align*}
The cellular map is then given by the following matrices:  
\[
C_0(f) =  
\begin{bmatrix}  
    1  
\end{bmatrix},  
\]

\[
C_2(f) =  
\begin{bmatrix}  
    a_1^1 & \cdots & a_1^n \\  
    \vdots & \ddots & \vdots \\  
    a_n^1 & \cdots & a_n^n  
\end{bmatrix}=A,  
\]

\[
C_3(f) =  
\begin{bmatrix}  
    b_1^1 & \cdots & b_1^m \\  
    \vdots & \ddots & \vdots \\  
    b_m^1 & \cdots & b_m^m  
\end{bmatrix}=B.  
\]  
Now, it is clear to see that $u^\Z(X,f)=u(C(f))=[1]+[A]-[B]$. One can take the matrix $B$ as $[B]=[1]+[B']$ for some matrix $B'$. Thus we obtain that
\[
u^\Z(X,f)=[A]-[B'],
\]
for any matrix $A,B'\in M(\Z)$. This finishes the proof.
\end{proof}

\section{Generalized Equivariant Lefschetz Invariant}\label{sec.lambda.invariant}

Weber developed a generalized equivariant Lefschetz invariant, denoted by $ \lambda_G(f) $, which takes values in an abelian group $ \Lambda_G(X,f) $ in \cite{Weber06}. This invariant arises as the image of the universal functorial Lefschetz invariant $ u^\mathbb{Z}_G(X,f) $ under a suitable trace map
$$
\tr_{G(X,f)}\colon U^\mathbb{Z}_G(X,f) \to \Lambda_G(X,f).
$$

In this section, our goal is to establish a relationship between two equivariant Lefschetz invariants: $ \lambda_G(f) $ and the Klein-Williams invariant $ \ell_G(f) $. Although these two invariants are defined in fundamentally different ways, under certain hypotheses, they encapsulate the same essential information for the equivariant fixed point problem.

We begin with a brief explanation of Weber's invariant $ \lambda_G(f) $; further details can be found in \cites{Weber06, weber07}. The first definition we present is essential for constructing the group $ \Lambda_G(X,f) $. This group extends the classical free abelian group generated by the twisted conjugacy classes of the fundamental group, where the Reidemeister trace is contained.

Let $x$ be an object in the fundamental category $\Pi(G,X)$. Note that we will consider $x$ as the point $x(1H)$ in the fixed point set $X^H$. Furthermore, we denote by $ X^H(x) $ the connected component of $ X^H $ that contains the point $ x(1H) $.

\begin{definition}\label{defnWeber}\cites{weber07, Weber06}
For an object $x$ in $\Pi(G,X)$ with $X^H(x)=X^H(f(x))$, and a morphism $\nu=(\id,[w])$ from the object $f(x)$ to $x$ in $\Pi(G,X)$. Let
\[
\Z[\pi_1(X^H(x),x)_\phi]
\]
be the free abelian group of the set $\pi_1(X^H(x),x)_\phi$ given by
\[
\pi_1(X^H(x),x)/ \phi(\gamma)\alpha\gamma^{-1} \sim \alpha
\]
where $\alpha\in \pi_1(X^H(x),x)$, $\gamma\in \operatorname{Aut}(x)$, and $\phi(\gamma):=\nu f(\gamma)\nu^{-1}\in \operatorname{Aut}(x)$.
\end{definition}

Here, $\operatorname{Aut}(x)$ denotes the set of morphisms from the object $x$ to $x$. It is clear that $\operatorname{Aut}(x)$ has a group structure, and there exits a group extension lying in the following short exact sequence \cite{Lck1989TransformationGA}.
\[
1 \to \pi_1(X^H(x),x) \to \text{Aut}(x) \to WH_x \to 1
\]
where $WH_x$ is the stabilizer group of the Weyl group action $WH$ that acts on the set of components of $X^H$. In other words, $WH_x$ is a subgroup of $WH$ that fixes $X^H(x)$.

Note that Definition \ref{defnWeber} is well-defined since $ \phi(\gamma)\alpha\gamma^{-1} \in \pi_1(X^H(x),x) $ for all $ \gamma \in \operatorname{Aut}(x) $ and $ \alpha \in \pi_1(X^H(x),x) $. This holds because $ \phi(\gamma) $ does not affect the $ WH_x $-component of the morphism $ \gamma $, and $ \pi_1(X^H(x),x) $ is a normal subgroup of $ \operatorname{Aut}(x) $.

Weber also showed that Definition \ref{defnWeber} is independent of the choice of the base point $ x $ and the path $ w $ from $f(x)$ to $x$ in \cite[Lemma 5.2]{Weber06}.

Now, we introduce the definition of the target group $ \Lambda_G(X,f) $, where $ \lambda_G(f) $ is contained, as follows.  
\[
\Lambda_G(X,f):=\bigoplus_{\substack{\overline{x}\in \text{Is}\Pi(G,X)\\ X^H(f(x))=X^H(x)}} \Z [\pi_1(X^H(x),x)_\phi]
\]
Here $\text{Is}\Pi(G,X)$ denotes the set of isomorphism classes of the fundamental category $\Pi(G,X)$. It is shown in \cite[Equation 3.3]{LueckRosenberg03} that we have 
\begin{align*}
    \text{Is}\Pi(G,X) & \xrightarrow{\simeq} \bigsqcup_{(H)} WH \setminus \pi_0(X^H)\\
    \{x: G/H \to X\} & \mapsto WH\cdot X^H(x)
\end{align*}
where $ WH \cdot X^H(x) $ denotes the orbit of the component $ X^H(x) $ of $ X^H $ that contains the point $ x(1H) $ under the $ WH $-action on $ \pi_0(X^H) $. Before defining the invariant $ \lambda_G(f) $, we first establish some notation, following the conventions used in \cites{Weber06, weber07}. Let 
$$
X^{>H}(x)=\{z\in X^H(x)\mid G_z \neq H\}.
$$
Also, we simply denote $f_{|X^H(x)}$ as $f^H(x)$, $f_{|X^{>H}(x)}$ as $f^{>H}(x)$, and $f_{|(X^H(x),X^{>H}(x))}$ as $f_H(x)$.

Let $ \widetilde{f^H(x)} $ denote the lift of $ f^H(x) $ to the universal covering space $ \widetilde{X^H(x)} $. We also define the subset $ \widetilde{X^{>H}(x)} \subseteq \widetilde{X^H(x)} $ as the preimage of $ X^{>H}(x) $ under the covering map, i.e., 
\[
\widetilde{X^{>H}(x)} = p^{-1}(X^{>H}(x)),
\]
where $p\colon \widetilde{X^H(x)} \to X^H(x)$. Furthermore, let the map 
\[ \widetilde{f^{>H}(x)}: \widetilde{X^{>H}(x)} \to \widetilde{X^{>H}(x)} \] 
denote the lift of $ f^{>H}(x) $.

\begin{definition}\cites{Weber06,weber07}
    The \textit{generalized equivariant Lefschetz invariant} $\lambda_G(f)$, lies in $\Lambda_G(f)$, at the summand indexed by $\overline{x}\in \text{Is}\Pi(G,X)$, is defined as follows.
    \[
    \lambda_G(f)_{\overline{x}} := \sum_{p\geq 0} (-1)^p \tr_{\Z\aut(x)}(C_p^c(\widetilde{f^H(x)},\widetilde{f^{>H}(x)})).
    \]
\end{definition}

This trace map $\tr_{\Z\aut(x)}$ is induced by the following projection.
\begin{align*}
    \Z\aut(x) & \to \Z [\pi_1(X^H(x),x)_\phi]\\
    \sum_{g\in\aut(x)}r_g \cdot g &\mapsto \sum_{g\in \pi_1(X^H(x),x)_\phi} r_g\cdot \overline{g}
\end{align*}

For the full definition of the trace map $ \operatorname{tr}_{\mathbb{Z} \operatorname{Aut}(x)} $; we refer to \cite[Definition 5.4]{Weber06}. Note that the trace map $ \operatorname{tr}_{G(X,f)} $, which maps the universal invariant $ u_G^\mathbb{Z}(X,f) $ to $ \lambda_G(f) $, is defined using $ \operatorname{tr}_{\mathbb{Z} \operatorname{Aut}(x)} $ and the Splitting Theorem \cite[Theorem 4.9]{Weber06} for the universal group $ U^\mathbb{Z}_G(X,f) $. We refer to \cite[Sections 4 and 5]{Weber06} for further details.

Both Klein-Williams invariant $\ell_G(f)$ and the generalized equivariant Lefschetz invariant $\lambda_G(f)$ give an obstruction theory for the equivariant fixed point problem under the gap hypothesis: see \cite[Theorem H]{KW2} for the first one, and \cite[Theorem 6.2]{weber07} for the second. Therefore, it is natural to ask that does two invariant give the same information although they defined by using different techniques. Now we present our result that gives the relationship between $\ell_G(f)$ and $\lambda_G(f)$.

\begin{theorem}\label{thm.KW.lambda}
Given an equivariant smooth self-map on a $G$-manifold $M$, the Klein-Williams invariant $\ell_G(f)$ and the generalized equivariant Lefschetz invariant $\lambda_G(f)$ vanishes at the same time. In other words,
\[
\ell_G(f)=0  \quad \text{if and only if} \quad \lambda_G(f)=0.
\]
\end{theorem}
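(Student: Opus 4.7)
The plan is to decompose both invariants by isotropy type into the same system of summands and to use the additivity of the Reidemeister trace for the stratified pair $(M^H(x), M^{>H}(x))$ to reduce both vanishing statements to a common one that we then handle by induction on the isotropy lattice. First, by Theorem \ref{ident3}, $\ell_G(f) = 0$ is equivalent to the vanishing of each projected Reidemeister trace $\overline{R(f^H_i)} \in \Z[\pi_1(M^H_i,*)_{f^H_i}]/W_GH$ over all conjugacy classes $(H)$ and components $M^H_i$. By definition, $\lambda_G(f) = 0$ is equivalent to the vanishing of every summand $\lambda_G(f)_{\overline{x}}$, where by \cite[Equation 3.3]{LueckRosenberg03} the indices $\overline{x}$ correspond to $W_GH$-orbits of components of $M^H$. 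Using the short exact sequence $1 \to \pi_1(M^H(x),x) \to \aut(x) \to W_GH_x \to 1$, Weber's target $\Z[\pi_1(M^H(x),x)_\phi]$ is identified with the Reidemeister set further quotiented by the stabilizer $W_GH_x$, and this coincides with the common target obtained by collecting the $\ell_G$-summands over the Weyl orbit of $x$.

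The key algebraic identity comes from the short exact sequence of cellular chain complexes
$$0 \to C^c(\widetilde{M^{>H}(x)}) \to C^c(\widetilde{M^H(x)}) \to C^c(\widetilde{M^H(x)}, \widetilde{M^{>H}(x)}) \to 0,$$
whose additivity of the twisted trace yields, after projection by $\tr_{\Z\aut(x)}$,
$$R(f^H(x)) \;=\; R(f^{>H}(x)) \;+\; \lambda_G(f)_{\overline{x}}$$
in the common target group described above, where $R(f^{>H}(x))$ is read as the assembly of Reidemeister traces over the strictly larger isotropy loci contained in $M^H(x)$. From this identity the equivalence of vanishings follows by induction on the isotropy lattice: if $\ell_G(f)=0$, every $R(f^K)$ vanishes, hence so does $R(f^{>H}(x))$, and the identity forces $\lambda_G(f)_{\overline{x}} = 0$; conversely, assuming $\lambda_G(f)=0$, downward induction starting from a maximal isotropy $(H_0)$ in $\overline{x}$, where $M^{>H_0}(x) = \emptyset$ forces $R(f^{H_0}(x)) = \lambda_G(f)_{\overline{x}} = 0$, allows one to propagate the vanishing to all smaller $(H)$ via the identity with $R(f^{>H}(x))$ already zero by the inductive hypothesis.

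The principal obstacle is the matching of quotient structures invoked in the first step: the tom Dieck $W_GH$-quotient appearing in $\overline{R(f^H_i)}$ must be reconciled with the $\aut(x)$-twisted conjugacy quotient in $\lambda_G(f)_{\overline{x}}$. Careful bookkeeping with the above short exact sequence, together with the way $W_GH$ permutes components within a single Weyl orbit while $W_GH_x$ acts internally on a fixed representative component, is required to make the additivity identity a well-defined equation in one common target group. A further technical point is that $M^H \setminus M^{>H}$ is not in general a smooth $G$-manifold, so the stratification identity must be formulated algebraically on relative cellular chain complexes, as in Weber's construction, rather than on geometric fixed-point submanifolds; this also ensures the passage from the smooth Klein--Williams side to the chain-level expression of $\lambda_G$ is legitimate.
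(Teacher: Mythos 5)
Your approach is genuinely different from the paper's and is not complete as written. The paper avoids the chain‑level stratification argument entirely: it invokes Weber's injective character map $\operatorname{ch}_G(M,f)$, which converts $\lambda_G(f)_{\overline{x}}$ into the (non‑relative) equivariant Lefschetz number $L^{\mathbb{Q}\operatorname{Aut}(x)}(\widetilde{f^H(x)})$, and then uses Weber's formula expressing that number as a weighted sum of local degrees at fixed points. After arranging $f$ to have generic fixed points (permissible by $G$‑homotopy invariance of both invariants), one identifies the local degree with the fixed‑point index and matches the resulting sum with $R(f^H(x))$ via the correspondence between twisted‑conjugacy classes $\overline{a}_z$ and Nielsen classes. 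In particular, the paper never needs to decompose or compare the contributions of the singular stratum $M^{>H}$; the character map already "integrates out" the relative chain complex.

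The gap in your argument is the asserted identity
\[
R(f^H(x)) \;=\; R(f^{>H}(x)) \;+\; \lambda_G(f)_{\overline{x}},
\]
which you obtain by projecting the trace additivity coming from the short exact sequence of cellular chain complexes. The additivity does give, after projection,
\[
\operatorname{tr}\bigl(C^c(\widetilde{f^H(x)})\bigr) = \operatorname{tr}\bigl(C^c(\widetilde{f^{>H}(x)})\bigr) + \operatorname{tr}\bigl(C^c(\widetilde{f^H(x)},\widetilde{f^{>H}(x)})\bigr),
\]
where the first term recovers $R(f^H(x))$ (up to the $W_GH_x$‑quotient) and the third is $\lambda_G(f)_{\overline{x}}$ by definition. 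But the middle term, the trace over the singular stratum $\widetilde{M^{>H}(x)}$, is not a Reidemeister trace and is not simply the "assembly" of the Klein–Williams components $\overline{R(f^K_j)}$ for $K\supsetneq H$. First, $M^{>H}(x)$ is a union of the larger‑isotropy fixed loci with non‑trivial overlaps, so any expression of the middle trace in terms of the individual $R(f^K)$ would require an inclusion–exclusion formula that you neither state nor prove. Second, and more fundamentally, the middle trace lives in $\mathbb{Z}[\pi_1(M^H(x),x)_\phi]$, whereas each $\overline{R(f^K_j)}$ lives in $\mathbb{Z}[\pi_1(M^K_j,\ast)_{f^K_j}]/W_GK$; the maps between these target groups are induced by the inclusions $M^K\hookrightarrow M^H$ and are not injective, so the vanishing of the latter does not a priori force the vanishing of the former (and vice versa for the downward‑induction direction). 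You correctly flag these as "careful bookkeeping" and "a further technical point," but they are precisely the substance that a proof by this route would have to supply, and without it the induction on the isotropy lattice does not close. If you want to pursue the stratification route, you would need to establish a precise Mayer–Vietoris/inclusion–exclusion identity for the trace over $M^{>H}(x)$ and verify compatibility of the various Weyl‑ and twisted‑conjugacy quotients along the inclusion maps; alternatively, reducing to Weber's character map (as the paper does) bypasses all of these issues at the cost of the smoothness and genericity hypotheses.
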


\begin{proof}
We will use the \textit{character map}
$$
\operatorname{ch}_G(M,f) \colon \Lambda_G(M,f) \to \bigoplus_{\overline{x} \in \operatorname{Is} \Pi(G,M)} \mathbb{Q}[\pi_1(M^H(x),x)_\phi],
$$
which is explicitly defined in \cite[Definition 6.2]{Weber06}. Moreover, it is shown that the map $\operatorname{ch}_G(M,f)$ is injective, and satisfies
$$
\operatorname{ch}_G(M,f)(\lambda_G(f))_{\overline{x}} = L^{\mathbb{Q}\operatorname{Aut}(x)}(\widetilde{f^H(x)}).
$$
In the case where $\widetilde{f^H(x)}$ is a smooth map on a connected, simply-connected, cocompact proper $\operatorname{Aut}(x)$-manifold $\widetilde{M^H(x)}$, \cite[Theorem 6.6]{Weber06} shows that the Lefschetz number can be computed as follows.
\[
L^{\Q\aut(x)}(\widetilde{f^H(x)})= \sum_{\substack{WH_x\cdot z \in \\WH_x\setminus\text{Fix}(f^H(x)) }} |(WH_x)_z|^{-1} \deg((\id_{T_z M^H(x)}-D_zf^H(x))^c)\cdot \overline{a}_z 
\]

\[
\in \Q[\pi_1(M^H(x),x)_\phi].
\]
Since each $ WH_x $ is finite and $ M^H(x) $ is a closed subset of the compact manifold $ M $, any $ M^H(x) $ is a cocompact proper $ WH_x $-manifold. This implies that $ \widetilde{M^H(x)} $ is also a cocompact $ \operatorname{Aut}(x) $-manifold, due to the following homeomorphism.
\[
\aut(x) \setminus \widetilde{M^H(x)} \cong WH_x \setminus M^H(x).
\]
Furthermore, all the isotropy groups of $ \operatorname{Aut}(x) $ are finite since the $ \pi_1(M^H(x),x) $-component of $ \operatorname{Aut}(x) $ acts freely on $ \widetilde{M^H(x)} $, and thus the isotropy groups contain only $ WH_x $, which is finite. Therefore, $ \widetilde{M^H(x)} $ is also a proper $ \operatorname{Aut}(x) $-manifold.

Without loss of generality, we can assume that $ f^H(x) $ has only generic fixed points; that is,  
\[
\det (\text{I} - D_z f^H(x)) \neq 0,
\]
for all $z \in \operatorname{Fix}(f^H(x))$. This is justified by the fact that one can always find a representative in the $ G $-homotopy class of $ f $ that satisfies this assumption, since both the Klein-Williams invariant and the generalized equivariant Lefschetz invariant are invariant under $ G $-homotopy.

Now, we first assume that $ \lambda_G(f) = 0 $. Using the homomorphism $ \text{ch}_G(M,f) $, we obtain  
\[
L^{\mathbb{Q}\aut(x)}(\widetilde{f^H(x)}) = 0  
\]  
for all $ \overline{x} \in \text{Is}\Pi(G,M) $. Next, consider the degree of the map $ (\id_{T_z M^H(x)} - D_z f^H(x))^c $, which is defined by
\[
\id_{T_z M^H(x)} - D_z f^H(x)\colon (T_z M^H(x))^c \to (T_z M^H(x))^c,
\]  
where $ (T_z M^H(x))^c $ denotes the one-point compactification of $ T_z M^H(x) $. Note that the degree of this map is equal to
\[
\sign (\det (\text{I} - D_z f^H(x))).
\]  
This holds because the local degree at $ 0 \in T_z M^H(x) $ is equal to 
\[
\deg((\id_{T_z M^H(x)} - D_z f^H(x))^c),
\]
and this local degree can be computed from the sign of the determinant of the Jacobian. Thus, we have the equality  
\[
\deg((\id_{T_z M^H(x)} - D_z f^H(x))^c) = i(f^H(x),z),
\]  
provided that each $ z \in \operatorname{Fix}(f^H(x)) $ is generic. Then,  

\begin{align*}
\sum_{\substack{WH_x\cdot z \in \\WH_x\setminus\text{Fix}(f^H(x)) }} |(WH_x)_z|^{-1} \deg((\id_{T_z M^H(x)}-D_zf^H(x))^c)\cdot \overline{a}_z\\
= \sum_{\substack{WH_x\cdot z \in \\WH_x\setminus\text{Fix}(f^H(x)) }} |(WH_x)_z|^{-1} i(f^H(x),z)\cdot \overline{a}_z=0.
\end{align*}
Note that $a_z\in\pi_1(M^H(x),x)$ is given by the loop $\zeta_z * f(\zeta_z)^{-1} * w^{-1}$, where $\zeta_z$ a path from $x$ to $z$, and $w$ is a path from $x$ to $f(x)$.

Furthermore, there is a one-to-one correspondence between the classes of $ \overline{a}_z \in \pi_1(M^H(x),x)_\phi $ and the fixed point classes $ [z] \in \text{Fix}^c(f^H(x)) $. Since we are considering only generic fixed points, the fixed point index of the class $ [z] $ is given by the following sum. Moreover, by the arguments above, one can conclude that it is equal to zero.
\[
i(f^H(x),[z]) = \sum_{z \in [z]} i(f^H(x),z) \cdot \overline{a}_z = 0,
\]  
for all fixed point classes $ [z] \in \text{Fix}^c(f^H(x)) $ containing $ z $. Consequently, this implies that
\[
R(f^H(x)) = 0.  
\] 
Thus, we have $ \overline{R(f^H(x))} = 0 $ for all conjugacy classes $ (H) $ of subgroups $ H \leq G $. As a result, we conclude that
\[
\ell_G(f) = 0.
\]  
The converse direction follows by applying the same reasoning: Suppose that $ \ell_G(f) = 0 $. Then, we have 
\[
\overline{R(f^H)} = 0 \quad \text{for all } (H).  
\]  
Thus, 
\[
R(f^H(x)) = \sum_{[z] \in \text{Fix}^c(f^H(x))} i(f^H(x),[z]) \cdot \overline{a}_z = 0.
\]  
Since we are considering only generic fixed points, we again have
\[
i(f^H(x),[z]) = \sum_{z \in [z]} i(f^H(x),z) \cdot \overline{a}_z = 0.
\]  
This further implies that 
\[
\sum_{\substack{WH_x \cdot z \in \\ WH_x \setminus \text{Fix}(f^H(x)) }} |(WH_x)_z|^{-1} \deg((\id_{T_z M^H(x)} - D_z f^H(x))^c) \cdot \overline{a}_z = 0.
\]  
Hence, we obtain  
\[
\text{ch}_G(M,f)(\lambda_G(f))_{\overline{x}} = L^{\mathbb{Q}\aut(x)}(\widetilde{f^H(x)}) = 0.  
\]  
By using the injectivity of the character map, we conclude that
\[
\lambda_G(f)_{\overline{x}} = 0 \quad \text{for all } \overline{x} \in \text{Is}\Pi(G,M).  
\]  
Thus, we have $ \lambda_G(f) = 0 $.  
\end{proof}

\section{Examples}\label{sec.examples}

In this section, we construct several examples to explicitly compare the Klein-Williams invariant $ \ell_G(f) $, the generalized equivariant Lefschetz invariant $ \lambda_G(f) $, and the universal invariant $ u^\mathbb{Z}_G(X,f) $.

The first example shows that the universal invariant is equal to the $ 1 \times 1 $-matrix $ [g] $ over $ \mathbb{Z}[\mathbb{Z}_2] $, where $ g $ is the generator of $ \mathbb{Z}_2 $. The fundamental group is trivial, so the trace of $ [g] $ is zero. In this case, both $ \ell_G(f) $ and $ \lambda_G(f) $ vanish.

In the second example, we have non-zero invariants, but $ \ell_G(f) $ and $ \lambda_G(f) $ are different. Although, in this example, they satisfy the relation $\ell_G(f)_x=|WH_x|\lambda_G(f)_x$ for all $\overline{x} \in \text{Is}\Pi(G,X)$, this relationship does not always hold. In fact, the final example illustrates the complexity of this relationship; in other words, these invariants cannot always be computed from each other.

\begin{example}
Let $ X = S^2 $ and $ G = \mathbb{Z}_2 $, the cyclic group of order 2, with generator $ g $. The group acts on the sphere by reflection across the $ xy $-plane as follows.
\[
g \cdot (x_1, x_2, x_3) = (x_1, x_2, -x_3).
\]  
Let $f$ be an equivariant map which is the same as the generator $g$ of the group $\Z_2$.
\[
f: S^2 \to S^2  
\]  
\[
(x_1, x_2, x_3) \mapsto (x_1, x_2, -x_3).  
\]  
Then, the Klein-Williams invariant $ \ell_G(f) $ is given by 
\[
\ell_G(f) = \overline{R(f)}^{\mathbb{Z}_2} + R(f^{\mathbb{Z}_2}).  
\]  
Note that this invariant lies in the following decomposition.
\[
\Omega_0^{\mathbb{Z}_2,\fr}(\Loop_f S^2) = \mathbb{Z}[\pi_1(S^2,x)_f/\mathbb{Z}_2] \oplus \mathbb{Z}[\pi_1((S^2)^{\mathbb{Z}_2},y)_f] \cong \mathbb{Z} \oplus \mathbb{Z}[\mathbb{Z}].  
\]  
Since $ \overline{R(f)}^{\mathbb{Z}_2} \in \mathbb{Z}[\pi_1(S^2,x)_f/\mathbb{Z}_2] \cong \mathbb{Z}[\{1\}] $, we have
\[
\overline{R(f)}^{\mathbb{Z}_2} = R(f) = L(f)[1],  
\]  
where $ L(f) $ is the Lefschetz number of $ f $, which is given by  
\[
L(f) = 1 + (-1)^2\deg(f) = 1 + (-1) = 0.  
\]  
Note that $ R(f^{\mathbb{Z}_2}) = 0 $ since $ f^{\mathbb{Z}_2}\colon S^1 \to S^1 $ is the identity map, which is homotopic to a fixed-point-free map. Therefore, $ \ell_{\mathbb{Z}_2}(f) $ vanishes.

Next, to compute the universal equivariant invariant $ u_G^\mathbb{Z}(X,f) $, we first need to analyze the fundamental category $ \Pi(\mathbb{Z}_2, S^2) $. Objects of $\Pi(\Z_2,S^2)$ can be categories as follows.

If $x\in S^2 - (S^2)^{\Z_2}$, then
\begin{align*}
    x\colon \Z_2/\{1\} & \to S^2\\
    1 & \mapsto x\\
    g & \mapsto g\cdot x
\end{align*}

If $y\in (S^2)^{\Z_2}$, then
\begin{align*}
    y\colon \Z_2/\Z_2 & \to S^2\\
    1 & \mapsto y
\end{align*}

There are four type of morphisms of $\Pi(\Z_2,S^2)$, which are listed below.
$$
\Mor(x_1,x_2)=\{(\text{id},[\text{path}_{x_1}^{x_2}]),(r,[\text{path}_{x_1}^{x_2}])\},
$$
where $\text{id}\colon \Z_2/\{1\}\to \Z_2/\{1\}$ is the identity map, and $r\colon \Z_2/\{1\}\to \Z_2/\{1\}$, defined as $r(1)=g$, $r(g)=1$. These are $\Z_2$-maps, and $\text{path}_{x_1}^{x_2}$ denotes a path from $x_1$ to $x_2$ in $S^2$. Note that any path from $x_1$ to $x_2$ are homotopic in $S^2$.
$$
\Mor(y_1,y_2)=\{(\text{id},[\text{p}_{y_1}^{y}*s^n*\text{p}_{y}^{y_2}]):n\in \Z\},
$$
where $\text{id}:\Z_2/\Z_2\to \Z_2/\Z_2$. Also, $s$ is the generator of $\pi_1((S^2)^{\Z_2},y)=\pi_1(S^1,y)$, which is the loop at $y$, going once around $S^1$ counterclockwise, and $\text{p}_{y}^{y'}$ denotes path from $y$ to $y'$ in $S^1$.
$$
\Mor(x,y)=\{(p,[\text{path}_{x}^{y}])\},
$$
where $p:\Z_2/\{1\}\to \Z_2/\Z_2$ such that $p(1)=1$, $p(g)=1$ is $\Z_2$-map.

$$
\Mor(y,x)= \emptyset
$$
because there is no $\Z_2$-map from $\Z_2/\Z_2$ to $\Z_2/\{1\}$.

We now construct the $\mathbb{Z} \Pi(\mathbb{Z}_2, S^2)$-chain complex $C^c(\widetilde{S^2})$.
To begin, consider a $\mathbb{Z}_2$-CW structure on the 2-sphere $S^2$. This structure includes: Two $0$-cells of orbit type $\mathbb{Z}_2$, denoted 
$$a, b \colon e_0 \to (S^2)^{\mathbb{Z}_2},$$
Two $1$-cells of orbit type $\mathbb{Z}_2$, denoted 
$$k, l \colon e_1 \to (S^2)^{\mathbb{Z}_2},$$
One $2$-cell of orbit type $\{1\}$, denoted by the following map.
\begin{align*}
\theta \colon \mathbb{Z}_2 \times e_2 & \to S^2\\
(1, u) & \mapsto u\\
(g, u) & \mapsto g \cdot u 
\end{align*}
Note that $S^2(x)$, the connected component of $S^2$ contains the point $x$, is equal to $S^2$ itself. Also, we know that $\aut(x) \cong \Z_2$ by the following short exact sequence.
\begin{align*}
    1 \to \pi_1(S^2(x),x) \to \aut(x) \to W1\cong\Z_2 \to 1
\end{align*}
Therefore, an $\aut(x)$-CW structure on $\widetilde{S^2(x)}$ is the same as $\Z_2$-CW structure on $S^2$ given above. Thus, we obtained that
\begin{align*}
    C^c(\widetilde{S^2(x)})=
    \begin{cases}
        \Z \oplus \Z &\text{  if  } i=0,1\\
        \Z[\Z_2] &\text{  if  } i=2\\
        0 &\text{  otherwise.}
    \end{cases}
\end{align*}
Now, note that 
\[
S^2(x)^{>1} = \{v \in S^2 \mid G_v \neq \{1\}\} = (S^2)^{\mathbb{Z}_2} \cong S^1,
\]
and thus, $ \widetilde{S^2(x)^{>1}} \cong (S^2)^{\mathbb{Z}_2} $. Clearly, CW structure of $\widetilde{S^2(x)^{>1}}$ is equal to the $\Z_2$-CW structure of $\widetilde{S^2(x)}$ at the degree $0$ and $1$, and it is equal to $0$ at the degree $2$. As a result, 
$$
C^c_i(\widetilde{f(x)},\widetilde{f^>(x)})=C^c_i(\widetilde{f(x)})-C_i^c(\widetilde{f^>(x)})=0
$$
for $i=0,1$. Therefore,
\[
C^c(\widetilde{f(x)}, \widetilde{f^{>1}(x)}): C^c(\widetilde{S^2(x)}, \widetilde{S^2(x)^{>1}}) \to C^c(\widetilde{S^2(x)}, \widetilde{S^2(x)^{>1}})
\]  
is given by 
\[
C^c_2(\widetilde{f(x)}): C^c_2(\widetilde{S^2(x)}) \to C^c_2(\widetilde{S^2(x)}),
\]  
and, this maps $ \theta \mapsto -g \cdot \theta $. Thus, we obtain that
\[
C^c(\widetilde{f(x)}, \widetilde{f^{>1}(x)}): \mathbb{Z}[\mathbb{Z}_2] \to \mathbb{Z}[\mathbb{Z}_2] = [-g],
\]  
which is a $ 1 \times 1 $-matrix over $ \mathbb{Z}[\mathbb{Z}_2] $.

Next, we compute the $ \operatorname{Aut}(y) $-CW structure on $ \widetilde{S^2(y)} $, which is the connected component of $ (S^2)^{\mathbb{Z}_2} $ containing the point $ y $. Thus, $ \widetilde{S^2(y)} = \widetilde{S^1} $. An $\operatorname{Aut}(y)$-CW complex structure on $\widetilde{S^1}$ consists of: Two $0$-cells of orbit type $\{1\}$, denoted
$$
  \widetilde{a}, \widetilde{b} \colon \operatorname{Aut}(y) \times e_0 \to \mathbb{R},
$$
Two $1$-cells of orbit type $\{1\}$, denoted
$$
  \widetilde{k}, \widetilde{l} \colon \operatorname{Aut}(y) \times e_1 \to S^1.
$$
Furthermore, $\aut(y) \cong \Z$ because of the following short exact sequence.
\begin{align*}
    1 \to \pi_1(S^1,y) \to \aut(y) \to W\Z_2 \cong \{1\} \to 1
\end{align*}
Thus, the cellular chain complex of $C^c(\widetilde{S^2(y)})$ as a $\aut(y)$-module is as follows.

\begin{align*}
    C^c(\widetilde{S^2(y)})=
    \begin{cases}
        \Z[\Z] \oplus \Z[\Z] &\text{  if  } i=0,1\\
        0 &\text{  otherwise}
    \end{cases}
\end{align*}
It is clear that $C^c(\widetilde{f(y)},\widetilde{f^{>1}(y)})=C^c(\widetilde{f(y)})$ since $(S^1)^{>\Z_2}=\emptyset$. Also, since the induced map $f^{\Z_2}$ is identity map on $S^1$, the cellular map is given in the following way.
\[
C^c(\widetilde{f(y)})=\begin{bmatrix}
  1 & 0\\ 
  0 & 1
\end{bmatrix} - \begin{bmatrix}
  1 & 0\\ 
  0 & 1
\end{bmatrix}=0
\]
Furthermore, by Splitting Theorem \cite[Theorem 4.9]{Weber06}, we know that the abelian group where the universal invariant lies splits as follows.
\begin{align*}
    U^{\Z}_{\Z_2}(f) &= K_0(\phi_{x,w}\text{-end}_{\text{ff}\Z\text{Aut}(x)})\oplus K_0(\phi_{y,v}\text{-end}_{\text{ff}\Z\text{Aut}(y)})\\
    u^{\Z}_{\Z_2}(f)&= C^c(\widetilde{f(x)},\widetilde{f^{>1}(x)}) \oplus C^c(\widetilde{f(y)},\widetilde{f^{>1}(y)})
\end{align*}
Therefore, the universal equivariant Lefschetz invariant is $u^{\Z}_{\Z_2}(f)=[g]$, which is a $1\times 1$-matrix over ${\Z[\Z_2]}$.

Now, we can compute the equivariant functorial Lefschetz invariant $\lambda_{\Z_2}(f)$ which lies in 
\[
\Lambda_{\Z_2}(f) \cong \Z[\pi_1(S^2(x),x)_f] \oplus \Z[\pi_1(S^1(y),y))_f]\cong \Z \oplus \Z[\Z].
\]

The invariant $\lambda_G(f)$ is defined as $\lambda_G(f)_x=\operatorname{tr}_{\Z \aut(x)}(u_G^\Z(X,f)_x)$ in each component. Therefore, $\lambda_{\Z_2}(f)_x = \tr_{\Z\aut(x)}([-g])=0$, and $\lambda_{\Z_2}(f)_y=0$.

As a result, $\lambda_{\Z_2}(f)$ and Klein-Williams invariant $\ell_{\Z_2}(f)$ vanish simultaneously, although the universal invariant does not vanish.
\end{example}

\begin{example} Let $X=S^3$ and $G=\Z_2$, cyclic group of order $2$, with generator $g$. The group acts on $S^3\subseteq \R^4$ in the following way.
\[ 
g \cdot (x_1,x_2,x_3,x_4)=(x_1,x_2,x_3,-x_4) 
\]
Let $f$ be an equivariant map given by
\begin{align*}
    f: S^3 & \to S^3\\
    (x_1,x_2,x_3,x_4) & \mapsto (-x_1,-x_2,-x_3,x_4)
\end{align*}

Similar to previous example, the Klein-Williams invariant $\ell_G(f)$ is equal to 
$$
\overline{R(f)}^{\Z_2} + R(f^{\Z_2}),
$$
and this invariant lies under the following decompositions.
\[
\Omega_0^{\Z_2,\fr}(\Loop_f S^3)= \Z[\pi_1(S^3,x)_f/\Z_2] \oplus \Z[\pi_1((S^3)^{\Z_2},y)_f]\cong \Z \oplus \Z
\]
This is because both $S^3$ and the fixed set 
\[
(S^3)^{\Z_2}=\{((x_1,x_2,x_3,0)\mid x_1^2+x_2^2+x_3^2=1\}=S^2
\]
are simply-connected. Since $\overline{R(f)}^{\Z_2} \in \Z[\pi_1(S^3,x)_f/\Z_2] \cong \Z[\{1\}]$, the reduced an the ordinary Reidemeister traces are the same, and 
$$
\overline{R(f)}^{\Z_2}=R(f)=L(f)[1],
$$
where $L(f)$ is the Lefschetz number of $f$, which is equal to
\[
L(f)=1+(-1)^3\deg(f)=1+(-1)^3(-1)^3=2.
\]
Note that $R(f^{\Z_2})=0$ since induced map $f^{\Z_2}$ on $S^2$ is antipodal map, and hence it has no fixed point. Therefore, $\ell_{\Z_2}(f)=2[1]$.

Now, we will compute the universal equivariant invariant. First, we give the objects and morphisms of the fundamental category $\Pi(\Z_2,S^3)$. Objects of $\Pi(\Z_2,S^3)$ are given as follows.

If $x\in S^3 - (S^3)^{\Z_2}$, then
\begin{align*}
    x: \Z_2/\{1\} & \to S^3\\
    1 & \mapsto x\\
    g & \mapsto g\cdot x
\end{align*}

If $y\in (S^3)^{\Z_2}$, then
\begin{align*}
    y: \Z_2/\Z_2 & \to S^3\\
    1 & \mapsto y
\end{align*}

Morphisms of $\Pi(\Z_2,S^3)$ are listed as follows.
$$
\Mor(x_1,x_2)=\{(\text{id},[\text{path}_{x_1}^{x_2}]),(r,[\text{path}_{x_1}^{x_2}])\},
$$
where $\text{id}:\Z_2/\{1\}\to \Z_2/\{1\}$, and $r:\Z_2/\{1\}\to \Z_2/\{1\}$ such that $r(1)=g$, $r(g)=1$ are $\Z_2$-maps like in the previous example. Also $\text{path}_{x_1}^{x_2}$ denotes any path from $x_1$ to $x_2$ in $S^3$.
$$
\Mor(y_1,y_2)=\{(\text{id},[\text{path}_{y_1}^{y_2}])\},
$$
where $\text{id}:\Z_2/\Z_2\to \Z_2/\Z_2$, and $\text{path}_{y_1}^{y_2}$ is an any path from $y_1$ to $y_2$ in $(S^3)^{\Z_2}\cong S^2$.
$$
\Mor(x,y)=\{(p,[\text{path}_{x}^{y}])\},
$$
where $p:\Z_2/\{1\}\to \Z_2/\Z_2$ such that $p(1)=1$, $p(g)=1$ is $\Z_2$-map.
$$
\Mor(y,x)= \emptyset
$$
because there is no $\Z_2$-map from $\Z_2/\Z_2$ to $\Z_2/\{1\}$.

To obtain $\Z\Pi(\Z_2,S^3)$-chain complex $C^c(\widetilde{S^3})$, first, we will give a $\Z_2$-CW structure of $S^3$. It has two $0$-cells of type $\Z_2$, namely 
$$
a,b: e_0\to (S^3)^{\Z_2},
$$ 
two $1$-cells of type $\Z_2$, namely 
$$
k,l: e_1 \to (S^3)^{\Z_2},
$$
two $2$-cells of type $\Z_2$, namely 
$$
\sigma,\alpha: e_2 \to (S^3)^{\Z_2},
$$
and one $3$-cell of type $\{1\}$, namely the map $\theta$, given as follows.
\begin{align*}
\theta\colon \Z_2 \times e_3 & \to S^3\\
(1,u) & \mapsto u\\
(g,u) & \mapsto g\cdot u
\end{align*}
Note that $S^3(x)=S^3$ and $\aut(x) \cong \Z_2$ by the following short exact sequence.
\begin{align*}
    1 \to \pi_1(S^3(x),x) \to \aut(x) \to W1\cong\Z_2 \to 1
\end{align*}
Therefore, an $\aut(x)$-CW structure on $\widetilde{S^3(x)}$ is the same as $\Z_2$-CW structure on $S^3$ given above. Thus, we obtained that
\begin{align*}
    C^c(\widetilde{S^3(x)})=
    \begin{cases}
        \Z \oplus \Z &\text{  if  } i=0,1,2\\
        \Z[\Z_2] &\text{  if  } i=3\\
        0 &\text{  otherwise.}
    \end{cases}
\end{align*}
Since  
\[
S^3(x)^{>1} = \{v \in S^3 \mid G_v \neq \{1\}\} \cong (S^3)^{\mathbb{Z}_2} = S^2,  
\]  
we have $ \widetilde{S^3(x)^{>1}} \cong S^2 $, whose $ \mathbb{Z}_2 $-CW structure is inherited from the $ \mathbb{Z}_2 $-CW structure of $ \widetilde{S^3(x)} $ in degrees 0, 1, and 2, and it is trivial in degree 3. Therefore, we conclude that the map  
\[
C^c(\widetilde{f(x)}, \widetilde{f^{>1}(x)}): C^c(\widetilde{S^3(x)}, \widetilde{S^3(x)^{>1}}) \to C^c(\widetilde{S^3(x)}, \widetilde{S^3(x)^{>1}})  
\]  
which is given by  
\[
C^c_3(f): C^c_3(\widetilde{S^3(x)}) \to C^c_3(\widetilde{S^3(x)}).  
\]  
Note that $ f $ maps $ \theta $ to $ -\theta $. Thus, $C^c(\widetilde{f(x)},\widetilde{f^{>1}(x)}): \Z[\Z_2] \to \Z[\Z_2] = -[-1]$, which is over $\Z[\Z_2]$. Now, we describe the $ \operatorname{Aut}(y) $-CW structure on $ \widetilde{S^3(y)} \cong \widetilde{S^2} $. The $ \operatorname{Aut}(y) $-CW complex structure of $ \widetilde{S^2} $ consists of: Two $ 0 $-cells of type $ \{1\} $, namely  
\[
a, b: \operatorname{Aut}(y) \times e_0 \to S^2,  
\]  
Two $ 1 $-cells of type $ \{1\} $, namely  
\[
k, l: \operatorname{Aut}(y) \times e_1 \to S^2,  
\]  
Two $ 2 $-cells of type $ \{1\} $, namely  
\[
\sigma, \alpha: \operatorname{Aut}(y) \times e_2 \to S^2.  
\]
Furthermore, we have $ \operatorname{Aut}(y) \cong \{1\} $ due to the following short exact sequence.  
\begin{align*}
    1 \to \pi_1(S^2,y) \to \aut(y) \to W\Z_2 \cong \{1\} \to 1
\end{align*}
Thus, the cellular chain complex of $C^c(\widetilde{S^3(y)})$ is non-equivariant cellular chain which consists of free $\Z$-modules.
\begin{align*}
    C^c(\widetilde{S^3(y)})=
    \begin{cases}
        \Z \oplus \Z &\text{if  } i=0,1,2\\
        0 &\text{otherwise.}
    \end{cases}
\end{align*}
For the same reason as in the previous example, we have  
\[
C^c(\widetilde{f(y)}, \widetilde{f^{>1}(y)}) = C^c(\widetilde{f(y)}). 
\]  
This time, the induced map $ f^{\mathbb{Z}_2} $ is the antipodal map on $ S^2 $. Thus, the cellular map is equal to the following $ \mathbb{Z} $-matrix:  

\[
C^c(\widetilde{f(y)}) =  
\begin{bmatrix}  
0 & 1 \\  
1 & 0  
\end{bmatrix}  
-  
\begin{bmatrix}  
0 & 1 \\  
1 & 0  
\end{bmatrix}  
+  
\begin{bmatrix}  
0 & 1 \\  
1 & 0  
\end{bmatrix}  
= 0.  
\]  
The abelian group $ U_{\mathbb{Z}_2}^\mathbb{Z}(f) $, where the universal invariant $ u_{\mathbb{Z}_2}^\mathbb{Z}(f) $ lies, splits in a similar way to the previous example:  
\[
U^{\mathbb{Z}}_{\mathbb{Z}_2}(f) = K_0(\phi_{x,w}\text{-end}_{\text{ff}\mathbb{Z}\operatorname{Aut}(x)}) \oplus K_0(\phi_{y,v}\text{-end}_{\text{ff}\mathbb{Z}\operatorname{Aut}(y)}),  
\]  
\[
u^{\mathbb{Z}}_{\mathbb{Z}_2}(f) = C^c(\widetilde{f(x)}, \widetilde{f^{>1}(x)}) \oplus C^c(\widetilde{f(y)}, \widetilde{f^{>1}(y)}).  
\]  
Therefore, the universal equivariant Lefschetz invariant is  
\[
u^{\mathbb{Z}}_{\mathbb{Z}_2}(f) = -[-1],  
\]  
which is a $ 1 \times 1 $-matrix over $ \mathbb{Z}[\mathbb{Z}_2] $. As a result, the equivariant functorial Lefschetz invariant is given by  
\[
\lambda_{\mathbb{Z}_2}(f) = -\tr_{\mathbb{Z}\operatorname{Aut}(x)}([-1]) = 1,  
\]  
which lies in  
\[
\Lambda_{\mathbb{Z}_2}(f) \cong \mathbb{Z}[\pi_1(S^3(x),x)_f] \oplus \mathbb{Z}[\pi_1(S^2(y),y)_f] \cong \mathbb{Z} \oplus \mathbb{Z}.  
\]  

One can observe that in the last two example, $\ell_G(f)_x=|WH_x|\lambda_G(f)_x$ for all $\overline{x} \in \text{Is}\Pi(G,X)$. However, this is not always true, and following example this does not hold.
\end{example}

\begin{example} Let $X=S^2$ and $G=\Z_2\times \Z_2$, the Klein group of order $4$, with generators $g$ and $h$. The group acts on $S^2\subseteq \R^3$ in the following way.
\begin{align*}
 g \cdot (x_1,x_2,x_3)=(x_1,x_2,-x_3) \\
h \cdot (x_1,x_2,x_3)=(x_1,x_2,-x_3)   
\end{align*}

Let $f$ be an identity map on $S^2$. We fist calculate the Klein-Williams invariant:

\begin{align*}
    \ell_G(f) &= R(f^G) \oplus \overline{R(f^{\langle h \rangle})}^{W\langle h \rangle} \oplus \overline{R(f^{\langle g \rangle})}^{W\langle g \rangle} \oplus \overline{R(f)}^G\\
    &= \chi(X^G)[1] \oplus \chi(X^{\langle h \rangle})[1] \oplus  \chi(X^{\langle g \rangle})[1] \oplus \chi(X)[1],
\end{align*}
which lies in
\[
    \Z[\pi_1(X^G,x)_f] \oplus \Z[\pi_1(X^{\langle h \rangle},y)_f]/W\langle h \rangle \oplus  \Z[\pi_1(X^{\langle g \rangle},z)_f]/W\langle g \rangle \oplus \Z[\pi_1(X,w)_f]/G.
\]
It is clear that 
\begin{align*}
       \chi(X^G)=\chi(S^0)=2\\
       \chi(X^{\langle h \rangle})=\chi(S^1)=0\\ 
       \chi(X^{\langle g \rangle})=\chi(S^1)=0\\
       \chi(X)=\chi(S^2)=2    
\end{align*}
Therefore, $\ell_G(f)=2[1] \oplus 2[1]$.

Now we will compute the universal invariant $u_G^\Z(X,f)$. Objects of the fundamental category $\Pi(G,X)$ is given as follows.

If $x \in X-X^G$, then
\begin{align*}
    x: G/\{1\} & \to X\\
    k & \mapsto k \cdot x
\end{align*}

If $y \in X^{\langle h\rangle}-X^G$, then
\begin{align*}
    y: G/\langle h \rangle & \to X\\
    1 & \mapsto y\\
    g & \mapsto  g\cdot y
\end{align*}

If $z \in X^{\langle g\rangle}-X^G$, then
\begin{align*}
    z: G/\langle g \rangle & \to X\\
    1 & \mapsto z\\
    h & \mapsto  h\cdot z
\end{align*}

If $w \in X^G$, then
\begin{align*}
    w: G/G & \to X\\
    1 & \mapsto w
\end{align*}

For this example, we will skip the morhpisms of $\Pi(G,X)$, we will only describe the necessary automorphisms later.

Now, we will give a $G$-CW structure of $S^2$. It has two $0$-cells of type $G$, namely 
$$
a,b: e_0\to (S^2)^G,
$$
one $1$-cells of type $\langle h \rangle$, namely 
$$
k: \langle g \rangle \times e_1 \to (S^2)^{\langle h \rangle},
$$
one $1$-cells of type $\langle g \rangle$, namely 
$$
l:\langle h \rangle \times  e_1 \to (S^2)^{\langle g \rangle},
$$
and one $2$-cell of type $\{1\}$, namely 
$$
\theta: G \times e_2 \to S^2.
$$
Clearly, $S^2(x)=S^2$ and $\aut(x) \cong G$ by the following short exact sequence.
\begin{align*}
    1 \to \pi_1(S^2(x),x) \to \aut(x) \to W1\cong G \to 1
\end{align*}
Therefore, an $\aut(x)$-CW structure on $\widetilde{S^2(x)}$ is the same as $G$-CW structure on $S^2$ given above. Thus, we obtained that
\begin{align*}
    C^c(\widetilde{S^2(x)})=
    \begin{cases}
        \Z \oplus \Z & \text{ if  } i=0\\
        \Z\langle g \rangle \oplus \Z\langle h \rangle & \text{  if  } i=1\\
        \Z[G] & \text{  if  } i=2\\
        0 & \text{  otherwise.}
    \end{cases}
\end{align*}
Note that 
$$
S^2(x)^{>1}=\{v\in S^2\mid G_v \neq \{1\}\}\cong (S^2)^{\langle h \rangle} \cup  (S^2)^{\langle g \rangle},
$$
and thus, 
$$
\widetilde{S^2(x)^{>1}} \cong  (S^2)^{\langle h \rangle} \cup  (S^2)^{\langle g \rangle}.
$$
It has the following cellular chain complex.
\begin{align*}
    C^c(\widetilde{S^2(x)^{>1})}=
    \begin{cases}
        \Z \oplus \Z & \text{ if  } i=0\\
        \Z\langle g \rangle \oplus \Z\langle h \rangle & \text{  if  } i=1\\
        0 & \text{  otherwise.}
    \end{cases}
\end{align*}
Therefore, 
$$
C^c(\widetilde{f(x)},\widetilde{f^{>1}(x)}):C^c(\widetilde{S^2(x)},\widetilde{S^2(x)^{>1})} \to C^c(\widetilde{S^2(x)},\widetilde{S^2(x)^{>1}})
$$
is given by 
$$
C^c_2(f): C^c_2(\widetilde{S^2(x)}) \to C^c_2(\widetilde{S^2(x)}),
$$
which maps $\theta \mapsto  \theta$ since $f=\text{id}$. Thus, 
$$
C^c(\widetilde{f(x)},\widetilde{f^{>1}(x)}): \Z[G] \to \Z[G] = [1],
$$ 
over $\Z[G]$. Now, we will consider $\widetilde{S^2(y)}$ and its $\aut(y)$-CW structure. A $W\langle h \rangle$-CW structure on 
$$
S^2(y)=(S^2)^{\langle h \rangle} \cong S^1,
$$
given by; two $0$-cells of type $W \langle h \rangle \cong \langle g \rangle$, namely 
$$
a,b: e_0\to (S^2)^G,
$$
and one $1$-cell of type $\{1\}$, namely 
$$
k: \langle g \rangle \times e_1 \to (S^2)^{\langle h \rangle}.
$$
Then, the $\aut(y)$-CW structure of $\widetilde{S^2(y)}\cong \R$ is the following. It has two $0$-cells of type $\langle g \rangle$,
$$
\widetilde{a},\widetilde{b}: \Z \times e_0\to (S^2)^G,
$$
and one $1$-cell of type $\{1\}$, namely 
$$
\widetilde{k}: \aut(y) \times e_1 \to (S^2)^{\langle h \rangle}.
$$
Note that 
$$
\aut(y)=\{(\text{id},[\text{p}_{y}^{a}*s^n*\text{p}_{a}^{y}])\mid n\in \Z\} \cup \{(\text{r},[\text{p}_{y}^{a}*s^n*\text{p}_{a}^{y}])\mid n\in \Z\},
$$
where $\text{id},r\colon G/\langle h \rangle\to G/\langle h  \rangle$ such that $r(1)=g$ are $G$-maps. Also, $s$ is the generator of $\pi_1((S^2)^{\Z_2},a)=\pi_1(S^1,a)$, which is the loop at $a$, going once around $S^1$ counterclockwise, and $\text{p}_{a}^{y'}$ denotes path from $a$ to $y'$ in $S^1$. Furthermore, there exists short exact sequence of the groups.
\[
1 \to \pi_1(S^2(y),y)\cong \pi_1(S^1,y) \to \aut(y) \to W \langle h \rangle \cong \Z_2 \to 1
\]
As a result, the cellular chain complex of $S^2(y)$ is given by
\begin{align*}
    C^c(\widetilde{S^2(y)})=
    \begin{cases}
        \Z[\Z] \oplus \Z[\Z] &\text{  if  } i=0\\
        \Z \aut(y) &\text{  if  } i=1\\
        0 &\text{  otherwise.}
    \end{cases}
\end{align*}
Now, we aim to compute the cellular map $C^c(\widetilde{f(y)},\widetilde{f^{>\langle h \rangle}(y)})$. First, observe that 
$$
S^2(y)^{>\langle h \rangle}=\{ v \in (S^2)^{\langle h \rangle}\mid G_v \neq \langle h \rangle \}=\{a,b\} =S^0.
$$
This implies that $\widetilde{S^2(y)^{>\langle h \rangle}}=\{\widetilde{a},\widetilde{b}\}$. Therefore,
\begin{align*}
    C^c(\widetilde{S^2(y)^{> \langle h \rangle}})=
    \begin{cases}
        \Z[\Z] \oplus \Z[\Z] &\text{  if  } i=0\\
        0 &\text{  otherwise.}
    \end{cases}
\end{align*}
As a result, $C^c(\widetilde{f(y)},\widetilde{f^{>\langle h \rangle}(y)})$ is given by 
$$
C^c_1(f): C^c_1(\widetilde{S^2(y)}) \to C^c_1(\widetilde{S^2(y)}),
$$
which maps $\widetilde{k} \mapsto  \widetilde{k}$ since $f=\text{id}$, and thus;
$$
C^c(\widetilde{f(y)},\widetilde{f^{>\langle h \rangle}(y)})=(-1)[1]=-[1],
$$
where $[1]$ is a $1 \times 1$-matrix over $\Z\aut(y)$.

The case for the $\aut(z)$-CW structure on $\widetilde{S^2(z)}$ is similar to the previous case for $\widetilde{S^2(y)}$. In particular, their cellular chain complexes are the same because note that 
$$
S^2(z)=(S^2)^{\langle g \rangle}\cong S^1,
$$
and it has the $W\langle g \rangle \cong \Z_2$-action, which is the same $W\langle h \rangle$-action on $S^2(y)$. Also, $\aut(z) \cong \aut(y)$. As a result,
\begin{align*}
    C^c(\widetilde{S^2(z)})=
    \begin{cases}
        \Z[\Z] \oplus \Z[\Z] &\text{  if  } i=0\\
        \Z \aut(z) &\text{  if  } i=1\\
        0 &\text{  otherwise.}
    \end{cases}
\end{align*}
Moreover, 
$$
S^2(z)^{>\langle g \rangle}=\{ v \in (S^2)^{\langle g \rangle}\mid G_v \neq \langle g \rangle \}=\{a,b\} =S^0.
$$
Thus, similar to $C^c(\widetilde{S^2(y)^{> \langle h \rangle}})$, we have the cellular chain complex
\begin{align*}
    C^c(\widetilde{S^2(z)^{> \langle g \rangle}})=
    \begin{cases}
        \Z[\Z] \oplus \Z[\Z] &\text{  if  } i=0\\
        0 &\text{  otherwise.}
    \end{cases}
\end{align*}
Therefore, $C^c(\widetilde{f(z)},\widetilde{f^{>\langle g \rangle}(z)})$ is given by 
$$
C^c_1(f): C^c_1(\widetilde{S^2(z)}) \to C^c_1(\widetilde{S^2(z)}),
$$
which maps $\widetilde{l} \mapsto  \widetilde{l}$, where 
$$
\widetilde{l}: \aut(z) \times e_1 \to (S^2)^{\langle g \rangle}
$$
is the $1$-cell of $\aut(z)$-CW structure of $\widetilde{S^2(z)}$. Thus,
$$
C^c(\widetilde{f(z)},\widetilde{f^{>\langle g \rangle}(z)})=(-1)^1[1]=-[1],
$$
where $[1]$ is a $1 \times 1$-matrix over $\Z\aut(z)$.

The last case is the $\aut(w)$-CW structure on $\widetilde{S^2(w)}$. In this case, $(S^2)^G\cong S^0$, which is not connected, and we have two connected components; $S^2(a)$ denotes $\{a\}$, the component which contains the point (0-cell) $a$, and $S^2(b)$ denotes $\{b\}$. Thus, $\widetilde{S^2(w)}$ is the single point. Note that $\aut(w)= \{1\}$ by the following short exact sequence.
\[
1 \to \pi_1(S^2(w),w) \to \aut(w) \to WG=\{1\}\to 1
\]
Therefore, $C^c(\widetilde{S^2(w)})$ is a free $\Z$-chain complex, described as follows.
\begin{align*}
C^c(\widetilde{S^2(a)})=C^c(\widetilde{S^2(b)})=
    \begin{cases}
        \Z &\text{if  } i=0\\
        0 &\text{otherwise.}
    \end{cases}
\end{align*}
It is clear that $C^c(\widetilde{f(w)},\widetilde{f^{> G}(w)})=C^c(\widetilde{f(w)})=[1]$, where $[1]$ is a $1 \times 1$-matrix over $\Z$ for $w=a,b$.

To sum up, the universal equivariant Lefschetz invariant for this example is the following.

\begin{align*}
    U^{\Z}_{G}(f) &= K_0(\phi_{x,v_x}\text{-end}_{\text{ff}\Z\text{Aut}(x)})\oplus K_0(\phi_{y,v_y}\text{-end}_{\text{ff}\Z\text{Aut}(y)}) \oplus\\
    &K_0(\phi_{z,v_z}\text{-end}_{\text{ff}\Z\text{Aut}(z)})\oplus K_0(\phi_{w,v_w}\text{-end}_{\text{ff}\Z\text{Aut}(w)})\\
    u^{\Z}_{G}(f)&= [1]_{\Z\aut(x)} \oplus -[1]_{\Z\aut(y)} \oplus -[1]_{\Z \aut(z)} \oplus ([1]_\Z + [1]_\Z)
\end{align*}
The functorial equivariant Lefschetz invariant 
\[
\lambda_G(f) \in \Lambda_G(f)= \bigoplus_{\substack{\overline{x} \in \text{Is}\Pi(G,X)\\X^H(x)=X^H(f(x))}} \Z\pi_1(X^H(x),x)_f
\] 
is given by
\begin{align*}
    &\lambda_G(f)_x=\tr_{\Z\aut(x)}([1])=1\\
    &\lambda_G(f)_y=\tr_{\Z\aut(y)}(-[1])=-1\\
    &\lambda_G(f)_z=\tr_{\Z\aut(z)}(-[1])=-1\\
    &\lambda_G(f)_w=\tr_{\Z\aut(w)}([1]+[1])=2.
\end{align*}
Note that these correspond to  
\[
\lambda_G(\text{id})_p = \chi(WH_p \backslash X^H(p), WH_p \backslash X^{>H}(p)),
\]
for all $ p \in \text{Is} \Pi(G, X) $. As a result, we obtain that
\begin{align*}
    &\lambda_G(f)_x=\chi(W1 \backslash X^1,WH_p \backslash X^{>1})=\chi(G \backslash S^2)- \chi(G\backslash (S^2)^{>1})=1-0=1 \\
    &\lambda_G(f)_y=\chi(W\langle h \rangle \backslash S^1,W\langle h \rangle \backslash S^0)=1-2=-1\\
    &\lambda_G(f)_z=\chi(W\langle g \rangle \backslash S^1,W\langle g \rangle \backslash S^0)=1-2=-1\\
    &\lambda_G(f)_w=\chi(WG \backslash (S^2)^G,WG \backslash (S^2)^{>1})=\chi(S^0, \emptyset)=2.
\end{align*}
\end{example}

\bibliographystyle{plain}

\begin{bibdiv}
\begin{biblist}

\bib{brown.converse.fixpt}{article}{
      author={Brown, R.~F.},
       title={{On a homotopy converse to the Lefschetz fixed point theorem.}},
        date={1966},
     journal={Pacific Journal of Mathematics},
      volume={17},
      number={3},
       pages={407 \ndash  411},
}

\bib{brownfix}{book}{
      author={Brown, R.F.},
       title={The {L}efschetz fixed point theorem},
   publisher={Scott, Foresman},
        date={1971},
        ISBN={9780673053954},
         url={https://books.google.com/books?id=_Q6oAAAAIAAJ},
}

\bib{DOLD19651}{article}{
      author={Dold, A.},
       title={Fixed point index and fixed point theorem for euclidean neighborhood retracts},
        date={1965},
        ISSN={0040-9383},
     journal={Topology},
      volume={4},
      number={1},
       pages={1\ndash 8},
         url={https://www.sciencedirect.com/science/article/pii/0040938365900443},
}

\bib{Ferrario1999}{article}{
      author={Ferrario, D.~L.},
       title={Generalized {L}efschetz numbers of pushout maps defined on non-connected spaces},
        date={1999},
     journal={Banach Center Publications},
      volume={49},
       pages={117\ndash 135},
         url={https://api.semanticscholar.org/CorpusID:118206053},
}

\bib{geoghegan-handgeotop}{article}{
      author={Geoghegan, R.},
       title={Nielsen fixed point theory},
        date={2003},
     journal={a chapther of "Handbook of Geometric Topology"},
       pages={499\ndash 521},
}

\bib{husseini82}{article}{
      author={Husseini, S.~Y.},
       title={Generalized lefschetz numbers},
        date={1982},
        ISSN={00029947},
     journal={Transactions of the American Mathematical Society},
      volume={272},
      number={1},
       pages={247\ndash 274},
         url={http://www.jstor.org/stable/1998959},
}

\bib{jiangbook}{book}{
      author={Jiang, B.},
       title={Lectures on {N}ielsen fixed point theory},
      series={Contemporary mathematics - American Mathematical Society},
   publisher={American Mathematical Society},
        date={1983},
        ISBN={9780821850145},
         url={https://books.google.com/books?id=vQmoAAAAIAAJ},
}

\bib{KW2}{article}{
      author={Klein, J.~R.},
      author={Williams, B.},
       title={Homotopical intersection theory, {II}: {E}quivariance},
        date={2010},
        ISSN={0025-5874},
     journal={Math. Z.},
      volume={264},
      number={4},
       pages={849\ndash 880},
      review={\MR{2593297}},
}

\bib{kucuk2025kleinwilliamsconjectureequivariant}{misc}{
      author={Küçük, B.},
       title={On the {K}lein and {W}illiams conjecture for the equivariant fixed point problem},
        date={2025},
         url={https://arxiv.org/abs/2505.04777},
        note={\href{https://arxiv.org/abs/2505.04777}{arXiv: 2505.04777}},
}

\bib{Latimer1933ACB}{article}{
      author={Latimer, C.~G.},
      author={Macduffee, C.~C.},
       title={A correspondence between classes of ideals and classes of matrices},
        date={1933},
     journal={Annals of Mathematics},
      volume={34},
       pages={313},
         url={https://api.semanticscholar.org/CorpusID:123881450},
}

\bib{Lck1989TransformationGA}{book}{
      author={L{\"u}ck, W.},
       title={Transformation groups and algebraic {K}-theory},
        date={1989},
         url={https://api.semanticscholar.org/CorpusID:118259409},
}

\bib{lueck1999}{article}{
      author={Lück, W.},
       title={The universal functorial lefschetz invariant},
    language={eng},
        date={1999},
     journal={Fundamenta Mathematicae},
      volume={161},
      number={1-2},
       pages={167\ndash 215},
         url={http://eudml.org/doc/212398},
}

\bib{LueckRosenberg03}{article}{
      author={Lück, W.},
      author={Rosenberg, J.},
       title={The equivariant {L}efschetz fixed point theorem for proper cocompact g-manifolds},
        date={2002},
         url={https://api.semanticscholar.org/CorpusID:13905053},
}

\bib{may1996equivariant}{book}{
      author={May, J.P.},
      author={Piacenza, R.J.},
      author={Cole, M.},
       title={Equivariant homotopy and cohomology theory: Dedicated to the memory of robert j. piacenza},
      series={Regional conference series in mathematics},
   publisher={American Mathematical Society},
        date={1996},
        ISBN={9780821803196},
         url={https://books.google.com/books?id=KOcZYVxkQO8C},
}

\bib{newman1972integral}{book}{
      author={Newman, M.},
       title={Integral matrices},
      series={Pure and Applied Mathematics},
   publisher={Academic Press},
        date={1972},
        ISBN={9780080873589},
         url={https://books.google.de/books?id=jEKFYutSGGYC},
}

\bib{Taussky_1949}{article}{
      author={Taussky, O.},
       title={On a theorem of latimer and macduffee},
        date={1949},
     journal={Canadian Journal of Mathematics},
      volume={1},
      number={3},
       pages={300–302},
}

\bib{Dieck1987}{book}{
      author={tom Dieck, T.},
       title={Transformation groups},
   publisher={De Gruyter},
     address={Berlin, New York},
        date={1987},
        ISBN={9783110858372},
         url={https://doi.org/10.1515/9783110858372},
}

\bib{Reidemeister1936AutomorphismenVH}{article}{
      author={van Reidemeister, K.},
       title={Automorphismen von homotopiekettenringen},
        date={1936},
     journal={Mathematische Annalen},
      volume={112},
       pages={586\ndash 593},
         url={https://api.semanticscholar.org/CorpusID:122191678},
}

\bib{weber07}{article}{
      author={Weber, J.},
       title={Equivariant nielsen invariants for discrete groups},
        date={2007},
        ISSN={0030-8730},
     journal={Pacific J. Math.},
      volume={231},
      number={1},
       pages={239\ndash 256},
      review={\MR{2304630}},
}

\bib{Weber06}{article}{
      author={Weber, J.},
       title={The universal functorial equivariant lefschetz invariant},
        date={2006},
     journal={K-theory},
      volume={36},
       pages={169\ndash 207},
         url={https://api.semanticscholar.org/CorpusID:15856465},
}

\bib{Wecken1941}{article}{
      author={Wecken, F.},
       title={Fixpunktklassen {II}},
    language={ger},
        date={1941},
     journal={Mathematische Annalen},
      volume={118},
       pages={216\ndash 234},
         url={http://eudml.org/doc/160082},
}

\end{biblist}
\end{bibdiv}

\end{document}